\newcommand{\url}{}
\theoremstyle{plain}
\newtheorem{thm}{THEOREM}[section]
\newtheorem{lm}[thm]{LEMMA}
\newtheorem{cl}[thm]{COROLLARY}
\newtheorem{prop}[thm]{PROPOSITION}
\theoremstyle{definition}
\theoremstyle{definition}
\newcommand{\R}{{\mathord{\mathbb R}}}
\renewcommand{\|}{{\Vert}}
\numberwithin{equation}{section}
\begin{document}

\title{A Stability Result for a Family of Sharp Gagliardo-Nirenberg Inequalities}
\author{Francis Seuffert\thanks{Work partially supported by U.S. National Science Foundation grants DMS 1201354 and DMS 1501007} \\ Rutgers University}

\maketitle

\section{Introduction}

\begin{abstract}
In their paper, \cite{CaFi}, E. Carlen and A. Figalli prove a stability estimate - also known as a quantitative inequality - for a sharp Gagliardo-Nirenberg inequality and use this result to solve a Keller-Segal Equation. The Gagliardo-Nirenberg Inequality that Carlen and Figalli prove their stability estimate for is part of a larger family of sharp Gagliardo-Nirenberg inequalities, for which the sharp constants and extremals were identified in some cases by Carrillo and Toscani in \cite{CaTo} and then for the remaining cases by Del Pino and Dolbeault in \cite{DeDo}. We prove a stability estimate for the entire family of sharp Gagliardo-Nirenberg inequalities for which Del Pino and Dolbeault calculated the sharp constants and extremals. Establishing stability estimates of inequalities is an active topic. A key piece of our proof is the application of a continuous dimension generalization of the Bianchi-Egnell Stability Estimate proven by the author in \cite{Se}. To see other examples of stability estimates, see \cite{CiFi, CiFu, DoTo, FiMa, FuMa}. In particular, applications of stability estimates to analysis of PDEs can be found in \cite{CiFi, CaFi}.
\end{abstract}

In this paper, we will prove a stability estimate - also known as a quantitative inequality - on a family of sharp Gagliardo-Nirenberg inequalities for which the extremals and sharp constants were proved in part by Carrillo and Toscani in \cite{CaTo} and then the extremals and sharp constants were completely proved later by Del Pino and Dolbeault in \cite{DeDo}. This work is an extension of a stability estimate proved by E. Carlen and A. Figalli in \cite{CaFi}, for a specific Gagliardo-Nirenberg inequality in the family of sharp Gagliardo-Nirenberg inequalities for which we derive a stability estimate. Carlen and Figalli used their stability estimate to help solve a Keller-Segal equation. They proved their stability estimate by exploiting a connection between the Sobolev Inequality and the family of sharp Gagliardo-Nirenberg inequalities that we study. In particular, they use the Bianchi-Egnell Stability estimate, a stability estimate of the Sobolev Inequality, to derive their stability estimate of a Gagliardo-Nirenberg inequality. In proving our extension of Carlen and Figalli's stability estimate, we follow a similar process. However, we use apply continuous dimension extensions of the Sobolev Inequality and the Bianchi-Egnell Stability Estimate in order to derive the stability estimate on the full class of Gagliardo-Nirenberg inequalities of interest.

The pivotal result on stability of the Sobolev Inequality is that of Bianchi and Egnell in 1990, \cite{BiEg}. Since then, there have been many stability estimates: see \cite{CiFu, DoTo, FiMa}, for stability estimates involving Sobolev inequalities and \cite{CaFi, DoTo, FuMa} for stability estimates on other inequalities. Applications of stability estimates have included analysis of PDEs; see \cite{CaFi, CiFi}. In particular, the work in \cite{CiFi} represents new perspectives on applying stability estimates on the Sobolev Inequality to the analysis of PDEs.

A key piece of machinery in the development of this stability estimate is a continuous dimension extension of Bianchi and Egnell's stability estimate of the Sobolev Inequality; this extension was proved in \cite{Se}. We derive our stability estimate of a family of sharp Gagliardo-Nirenberg inequalities from the extension of the Bianchi-Egnell Stability Estimate in \cite{Se}. The central idea connecting these two stability estimates is the connection between the Sobolev Inequality to the Gagliardo-Nirenberg inequalities. To this end, we use this introduction to give some background of these two inequalities, present their connection, and then explain the need for the extension of the Sobolev Inequality and the Bianchi-Egnell Stability Estimate to continuous dimensions in order to prove the stability estimate on the full class of sharp Gagliardo-Nirenberg inequalities of interest.

\subsection{The Sobolev Inequality}

The Sobolev Inequality, which  has been  pivotal in many studies of PDEs, has the following sharp form:
\begin{thm}\label{SobInThm}
Let $N \geq 3$ and let $\varphi\in L^1_{\rm loc}(\R^N)$ have a distributional gradient $\nabla \varphi$ that is square integrable, and suppose also that $\varphi$ vanishes at infinity in the sense that for all $\epsilon>0$, the set $\{ x\in \R^N \: \ |\varphi(x)| > \epsilon\}$ has finite Lebesgue measure. Then 
\begin{equation}\label{SobIn}
\| \varphi \|_{2^*} \leq S_N \| \nabla \varphi \|_2  \,,
\end{equation}
where $2^* = \frac{2N}{N-2}$ and 
\[
S_N = \| F_{1,0} \|_{2^*} / \| \nabla F_{1,0} \|_2 \,,
\]
for
\begin{equation}\label{SobEx}
F_{s, x_0} (x) := s^\frac{N-2}{2} (1 + s|x-x_0|^2)^{\frac{N-2}{2}} \,,
\end{equation}
where $s > 0$, $x_0 \in \mathbb{R}^N$. There is equality if and only if $\varphi$ is a constant multiple of $F_{s, x_0} $ for some
$s > 0$, $x_0 \in \mathbb{R}^N$.
\end{thm}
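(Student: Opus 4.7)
The plan is to follow the classical Aubin--Talenti route, reducing the sharp inequality to a one-dimensional calculation through symmetrization. First I would exploit the invariance of the ratio $\|\varphi\|_{2^*}/\|\nabla \varphi\|_2$ under translations, rotations, and dilations $\varphi(x) \mapsto \lambda^{(N-2)/2}\varphi(\lambda x)$, and replace $\varphi$ by $|\varphi|$ (which preserves $\|\varphi\|_{2^*}$ and cannot increase $\|\nabla \varphi\|_2$) to reduce to the nonnegative case. Then, applying the symmetric decreasing rearrangement $\varphi \mapsto \varphi^*$, the $L^{2^*}$ norm is preserved while Polya--Szego gives $\|\nabla \varphi^*\|_2 \le \|\nabla \varphi\|_2$, so it suffices to prove the inequality for nonnegative, radial, radially-decreasing $\varphi$.

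For such $\varphi$, write $u(r) = \varphi(x)$ with $r = |x|$. A normalized maximizer of the reduced one-dimensional functional satisfies the Euler--Lagrange equation
\[
-u''(r) - \frac{N-1}{r}u'(r) \;=\; \lambda\, u(r)^{2^*-1}, \qquad r > 0,
\]
subject to smoothness at the origin and vanishing at infinity. Direct substitution shows that $u_0(r) = (1+r^2)^{-(N-2)/2}$ solves this ODE for an explicit $\lambda$; this $u_0$ is, up to the scaling built into \eqref{SobEx}, the profile $F_{1,0}$, and the full family $F_{s,x_0}$ is recovered via the translation and dilation invariance above. Evaluating the ratio on $F_{1,0}$ produces the sharp constant $S_N$ by definition.

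To characterize equality, two separate uniqueness facts are needed. \emph{Radial uniqueness}: finite-energy positive radial solutions of the ODE are unique modulo scaling. The substitution $u(r) = r^{-(N-2)/2} w(\log r)$ makes the ODE autonomous, and a phase-plane analysis shows its finite-energy trajectories form a single homoclinic orbit; equivalently, stereographic projection transfers the problem to the round sphere $S^N$, where the analogous extremal problem admits only the constants, and pulling back yields exactly $\{F_{s,x_0}\}$. \emph{Descent from radial to general $\varphi$}: one invokes the sharp equality case of Polya--Szego due to Brothers--Ziemer, which forces an extremizer $\varphi$ to agree, up to sign and translation, with its symmetric decreasing rearrangement.

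I expect the principal obstacle to be the radial uniqueness step, which genuinely relies on the criticality of the exponent $2^*$ and on the conformal invariance of the Dirichlet integral. The cleanest treatment is via the stereographic change of variables, which recasts Theorem~\ref{SobInThm} as an equivalent sharp inequality on $S^N$ whose extremizers are the constants together with their images under the conformal group of the sphere; this is precisely the passage that will later need to be extended to continuous dimension in order to treat the full family of Gagliardo--Nirenberg inequalities that motivate the rest of the paper.
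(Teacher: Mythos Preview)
The paper does not give its own proof of Theorem~\ref{SobInThm}; it is stated as classical background and attributed, after a historical discussion, to Aubin~\cite{Au} and Talenti~\cite{Ta} (with the earlier radial computation of Bliss~\cite{Bl} and the Brothers--Ziemer equality case for rearrangement mentioned as an alternative route). Your outline is precisely the Talenti-style argument the paper alludes to: symmetrize via P\'olya--Szeg\H{o}, solve the radial Euler--Lagrange ODE to identify the Aubin--Talenti profiles, and invoke Brothers--Ziemer together with radial uniqueness for the equality characterization. So your proposal is consistent with what the paper cites rather than different from anything it proves.

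One small gap worth flagging: you pass directly from the variational problem to the Euler--Lagrange ODE without first establishing that a maximizer exists in the radial class. Since $2^*$ is the critical exponent, compactness is not automatic, and this is exactly where concentration can occur; the classical proofs handle this either by a direct concentration-compactness/dilation-normalization argument or, as in Talenti, by reducing to the Bliss one-dimensional inequality where existence is elementary. Your sketch would be complete once you say which of these you intend.
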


\noindent This sharp form of the inequality, in which not only the sharp constant, but all of the cases of equality are given, is the result of many individual contributions. 

The proof of the inequality (\ref{SobIn}) with an inexplicit constant  is usually credited to the 1938 paper \cite{So} of S.L. Sobolev.  In hindsight, the Sobolev Inequality with the sharp constant (but without the determination of the cases of equality) can be found in work \cite{Bl}  by G.A. Bliss from 1930. He only considered radial functions, but combining his results with the Faber-Krahn inequality, one would have the inequality in the form stated above. To be more precise,  Bliss's result includes a specification of  the extremals within the class of radial monotone decreasing functions. Combining Bliss' result with the Faber-Krahn rearrangement inequality (known at the time) would have yielded the sharp Sobolev Inequality. More recent work of Brothers and Ziemer on the cases of equality in the Faber-Krahn inequality would also provide the specification of extremals. However, this was not the historical route to the full result in Theorem \ref{SobInThm}.

PDE applications of the Sobolev Inequality did not appear until after Sobolev's 1938 paper, \cite{So} when Sobolev's paper began to be cited in early 1950s. These early applications \cite{Ag,La,Fr} of the Sobolev Inequality to PDEs did not make use of the sharp constant or extremals of the inequality. To our knowledge, the first application of the Sobolev Inequality with an explicit constant is by H. Fujita  in 1961 and then shortly after by Fujita and T. Kato. Fujita \cite{Fuj} used the Sobolev Inequality with an explicit constant to prove existence of weak steady-state solutions of the Navier-Stokes equation provided the boundary data satisfy an explicit smallness condition. In their 1964 paper, \cite{FuKa}, Fujita and Kato used the Sobolev Inequality with an explicit constant to prove global existence in time of solutions of the Navier-Stokes equation with suitably small initial conditions. 

The Sobolev constant is what determines the explicit form of the smallness condition in these papers. This fact motivated the search for an explicit value. Fujita \cite{Fuj} showed that the sharp Sobolev constant for three dimensions, $S_3$, satisfies
\begin{equation}\label{ShConsBd}
S_3 \leq (4/\pi)^{1/3} \,.
\end{equation}
The next major development in calculating a sharp constant is from a 1971 paper by R. Rosen, \cite{Ro}, in which it is shown that the sharp constant for $N=3$ is no bigger than
\begin{equation}\label{ShCons3Dim}
4^{1/3}/ (3 \pi)^{1/2} \,;
\end{equation}
this value is in fact sharp. Finally, in a paper published in 1976, \cite{Au}, T. Aubin computed the sharp constant for the Sobolev Inequality and classified its extremals. Slightly later in 1976, G. Talenti also published a paper calculating the sharp constant and classifying the extremals. Aubin's work is set in a geometric investigation of the isoperimetric inequality, whereas Talenti's proof is deduced in a purely analytic argument.

\subsection{Deriving a Family of Sharp Gagliardo-Nirenberg Inequalities from the Sobolev Inequality: a Partial Result}

A functional inequality that is profoundly related to the Sobolev Inequality is a family of sharp Gagliardio-Nirenberg inequalities, GN inequalities for short, whose sharp constant and extremals were fully classified by Del Pino and Dolbeault, and earlier classified in part by Carillo and Toscani; see Carrillo and Toscani's work, \cite{CaTo}, for applications of the GN inequalities to the porous medium equation. Indeed, one can derive the sharp form of the entire family of these GN inequalities and deduce their extremals from an extension of the Sobolev Inequality to continuous dimensions. We will show how to derive some of these sharp GN inequalities with the corresponding extremals from the Sobolev Inequality in a little bit. Before doing so, we introduce these sharp GN inequalities: Let $u \in \dot{H}^1 (\mathbb{R}^n)$ for $n \geq 2$. Then, for $1 \leq t \leq n/(n-2)$ (if $n=2$, $1 \leq t < \infty$)
\begin{equation}\label{GN ineq's}
\| u \|_{2t} \leq A_{n,t} \| \nabla u \|_2^\mu \| u \|_{t+1}^{1 - \mu} \,, \mu = \frac{n(t-1)}{t[2n - (t+1)(n-2)]} \,,
\end{equation}
where $A_{n,t}$ is a sharp constant depending on $n$ and $t$. One should note that when $t = 1$, $\mu = 0$, and (\ref{GN ineq's}) is a trivial inequality; and when $t = \frac{n}{n-2}$, $\mu = 1$, and (\ref{GN ineq's}) is the Sobolev Inequality. Also, $\mu$ varies continuously between 0 and 1 as $t$ varies between 1 and ${2^*}$ (or $\infty$ if $n = 2$). Moreover, the complete set of extremals of the one-parameter set of GN inequalities given in (\ref{GN ineq's}) is the constant multiples of the functions given by
\begin{equation}\label{Extremals}
v_{\lambda, x_0} (x) := \lambda^\frac{n}{2t} (1 + \lambda^2 |x - x_0|^2)^{-1/(t-1)} \,, \lambda > 0 \,, x_0 \in \mathbb{R}^n \,.
\end{equation}
For convenience, we define the following function:
\begin{equation}\label{v Def}
v(x) := v_{1,0} (x) \,.
\end{equation}
Note that the above implies that
\begin{equation}\label{SharpConst}
A_{n,t} = \| v \|_{2t} / \| \nabla v \|_2^\mu \| v \|_{t+1}^{1-\mu} \,.
\end{equation}

The main result of this paper is the derivation of a stability estimate on the sharp GN inequalities summarized by (\ref{GN ineq's}) from a stability estimate on an extension of the Sobolev Inequality to continuous dimensions. The key to deriving this relationship, is a striking observation that one can use an extension of the Sobolev Inequality to continuous dimensions to derive the sharp GN inequalities, (\ref{GN ineq's}), and their extremals. This relationship is stated and illustrated in the work of Bakry, Gentil, and Ledoux in \cite{BaGe}. For the time being, we will present a partial result, using only the classical Sobolev Inequality in integer dimensions, to illustrate this connection. To this end, we present and prove the following

\begin{prop}\label{Sob to GN}
The Sobolev Inequality in $(m+n)$-dimensions, for integers $m >0$ and $n \geq 2$, can be used to deduce the sharp GN inequalities, (\ref{GN ineq's}), and their extremals for $t$ given by
\begin{equation}\label{tFormula}
t = \frac{m+2n}{m+2n-4} \,.
\end{equation}
\end{prop}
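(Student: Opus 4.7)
The plan is to use a classical ``dimensional lifting'' trick: given a positive function $u$ on $\mathbb{R}^n$, build a carefully chosen function $\varphi$ on $\mathbb{R}^{m+n}$, apply the classical Sobolev inequality (Theorem \ref{SobInThm}) in dimension $N := m+n$ to $\varphi$, and then integrate out the extra $m$ coordinates to recover a GN inequality for $u$ with the exponent $t$ given by (\ref{tFormula}). Concretely, for smooth positive $u$ decaying suitably at infinity, define
$$\varphi(x,y) := \bigl(u(x)^{-\gamma} + |y|^2\bigr)^{-(N-2)/2}, \qquad (x,y)\in\mathbb{R}^n\times\mathbb{R}^m,$$
where $\gamma>0$ is a parameter to be chosen.

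The central calculation is the following. Using polar coordinates in $y$ together with the substitution $|y|=u(x)^{-\gamma/2}s$, each of $\|\varphi\|_{2^*}^{2^*}$, $\int|\nabla_y\varphi|^2$, and $\int|\nabla_x\varphi|^2$ factors as a finite beta-type integral in $s$ times a power of $u$ integrated over $\mathbb{R}^n$. Since $|\nabla\varphi|^2=|\nabla_x\varphi|^2+|\nabla_y\varphi|^2$, the $y$-integration converts $\int|\nabla\varphi|^2$ into a sum $C_1\int u^{\alpha_1}|\nabla u|^2+C_2\int u^{\alpha_2}$, and $\|\varphi\|_{2^*}^{2^*}$ into $C_3\int u^{q}$, where $\alpha_1,\alpha_2,q$ are explicit linear functions of $\gamma,m,n$. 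The choice $\gamma=4/(m+2n-4)$ is the \emph{unique} positive value for which $\alpha_1=0$, $\alpha_2=t+1$, and $q=2t$ hold simultaneously, with $t$ as in (\ref{tFormula}). This simultaneous alignment is the heart of the construction, and the arithmetic verification is the main combinatorial task.

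With this choice of $\gamma$, Sobolev's inequality applied to $\varphi$ yields
$$\|u\|_{2t}^{2t/2^*}\leq K\bigl(\|\nabla u\|_2^2+\kappa\|u\|_{t+1}^{t+1}\bigr)^{1/2},$$
a GN inequality in additive (not multiplicative) form. To reach the standard product form (\ref{GN ineq's}), I apply the above to $u_\lambda(x):=u(\lambda x)$ for each $\lambda>0$ and minimize over $\lambda$. Since $\min_{\lambda>0}(A\lambda^a+B\lambda^{-b})$ is a constant multiple of $A^{b/(a+b)}B^{a/(a+b)}$, the sum collapses into a product of powers of $\|\nabla u\|_2$ and $\|u\|_{t+1}$. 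A direct bookkeeping check using $2^*=2N/(N-2)$ and the value of $t$ confirms that the resulting power of $\|\nabla u\|_2$ is exactly $\mu=n(t-1)/[t(2n-(t+1)(n-2))]$, matching (\ref{GN ineq's}).

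For the extremals, I trace the equality cases backwards through the three steps. Equality in Sobolev forces $\varphi=cF_{s,x_0}$ for some $s>0$ and $x_0\in\mathbb{R}^{m+n}$; by the symmetry of $\varphi$ in the $y$-variable, the $y$-component of $x_0$ must vanish. Writing $x_0=(x_*,0)$ and matching $(u(x)^{-\gamma}+|y|^2)$ with a constant multiple of $(1+s|x-x_*|^2+s|y|^2)$ as polynomials in $|y|^2$ forces $u(x)$ to be a constant multiple of $(s^{-1}+|x-x_*|^2)^{-1/(t-1)}$, which is a constant multiple of $v_{\lambda,x_*}$ with $\lambda=\sqrt{s}$. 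Conversely, plugging $v_{\lambda,x_*}$ into the lifting recovers a Sobolev extremal, so equality propagates and the constant $A_{n,t}$ in (\ref{SharpConst}) is sharp with the claimed extremals. The remaining technical point—extending from smooth positive rapidly decaying $u$ to arbitrary $u\in\dot{H}^1(\mathbb{R}^n)$—is handled by standard density and by replacing $u$ with $|u|$ via $\|\nabla|u|\|_2\leq\|\nabla u\|_2$. The principal obstacle is the algebraic bookkeeping, namely pinning down $\gamma$ so that the three exponent conditions coincide, and then verifying that the scaling-optimized exponent equals $\mu$; once these are in place, everything cascades.
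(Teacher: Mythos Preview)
Your proposal is correct and follows essentially the same approach as the paper: the lifting $\varphi(x,y)=(u(x)^{-\gamma}+|y|^2)^{-(N-2)/2}$ with $\gamma=4/(m+2n-4)$ is exactly the paper's choice $f=u^{-4/(m+2n-4)}$, and both of you pass from the resulting additive inequality to the multiplicative GN form by optimizing over the dilation parameter, then read off the extremals from the Sobolev equality case. Your treatment of the extremals is slightly more detailed (forcing the $y$-center to vanish and matching coefficients) than the paper's, which simply observes that $u=v$ yields $\varphi=F_{1,0}$ and invokes scaling/translation invariance, but the substance is the same.
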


\begin{proof}
Let $\varphi: \mathbb{R}^{m+n} \to \mathbb{R}$ be given by
\begin{equation}\label{Phif}
\varphi (x,y) = [f(x) + |y|^2]^{-(m+n-2)/2} \,, x \in \mathbb{R}^n, y \in \mathbb{R}^m,
\end{equation}
for $f \geq 0$. Then
\begin{align}
|\nabla_{x,y} \varphi|^2 &= \left( \frac{m+n-2}{2} \right)^2 [ f + |y|^2 ]^{-(m+n)} (|\nabla_x f|^2 + 4|y|^2) \,, \text{ and} \nonumber \\
|\varphi|^{2^*} &= [f + |y|^2]^{-(m+n)} \,. \nonumber
\end{align}
Thus, the integrated Sobolev Inequality takes the form
\begin{equation}\label{IntSob}
\left( \int_{\mathbb{R}^n} f^{-\frac{m+2n}{2}} \mathrm{d}x \right)^{(m+n-2)/(m+n)} \leq c_1 \int_{\mathbb{R}^n} f^{-\frac{m+2n}{2}} |\nabla_x f|^2 \mathrm{d}x + c_2 \int_{\mathbb{R}^n} f^{-\frac{m+2n-2}{2}} \mathrm{d}x \,,
\end{equation}
for constants $c_1$ and $c_2$ depending upon $m$ and $n$. Since we obtained (\ref{IntSob}) by integrating the Sobolev Inequality applied to (\ref{Phif}) in the $y$-variable, (\ref{IntSob}) yields equality if $f(x) = 1 + |x|^2$, because this $f$ makes $\varphi$ given by (\ref{Phif}) into an extremal for the Sobolev Inequality. Replacing $f$ with $u^{-\frac{4}{m+2n-4}}$, (\ref{IntSob}) becomes
\begin{equation}\label{GN Add}
\| u \|_{2t}^{4t/2^*} \leq c_3 \| \nabla u \|_2^2 + c_4 \| u \|_{t+1}^{t+1} \,,
\end{equation}
for further constants $c_3$ and $c_4$, and $t$ given by (\ref{tFormula}). If we replace $u$ with $u_\lambda$ given by
\[
u_\lambda (y) = \lambda^{n/2t} u (\lambda y) \,,
\]
in (\ref{GN Add}) and then optimize with respect to $\lambda$, we find that for $u$ such that
\[
\| \nabla u \|_2^2 / \| u \|_{t+1}^{t+1} = \| \nabla v \|_2^2 / \| v \|_{t+1}^{t+1} \,,
\]
(\ref{GN Add}) becomes (\ref{GN ineq's}), because
\[
c_3 \| \nabla u \|_2^2 + c_4 \| u \|_{t+1}^{t+1} = A_{n,t}^{4t/2^*} \| \nabla u \|_2^{(4t/2^*) \mu} \| \nabla u \|_{t+1}^{(4t/2^*) (1-\mu)} \,.
\]

Finally, we deduce that there is equality in (\ref{GN ineq's}) if $u = v$, because in this case, $f (x) = v^{-\frac{4}{m+2n-4}} (x) = 1+|x|^2$. In this case, $\varphi$ given by (\ref{Phif}) is the Sobolev extremal, $F_{1,0}$, and we must have equality. Having concluded that $v$ is an extremal of (\ref{GN ineq's}), we can deduce the remaining extremals as a result of translation invariance of the norms and homogeneity on each side of the inequality (\ref{GN ineq's}) with respect to constant multiples and conformal invariance with respect to dilation characterized by the operation:
\[
u (x) \mapsto \lambda^\frac{n}{2t} u(\lambda x) \,.
\]
\end{proof}

\subsection{Motivation for Our Main Results: Deriving Gagliardo-Nirenberg Stability Estimates from Sobolev Stability Estimates}

In 1983 H. Brezis and L. Nirenberg wrote a paper, \cite{BrNi}, chronicling their investigation of positive solutions of nonlinear elliptic equations involving critical Sobolev exponents. In the process of this investigation, they proved some improved Sobolev inequalities. A specific case of the general class of elliptic equations that Brezis and Nirenberg analyze is given by
\begin{align}\label{BrNiEE2}
-\Delta \varphi =& \varphi^{2^*-1} + \lambda \varphi \,, \text{ on } \Omega \nonumber \\
\varphi >& 0 \,, \text{ on } \Omega \nonumber \\
\varphi =& 0 \,, \text{ on } \partial \Omega \,,
\end{align}
for $\Omega \subseteq \mathbb{R}^N$ a bounded domain with $N \geq 3$. For $N = 3$, Brezis and Nirenberg proved the following
\begin{thm}\label{BrNiThm1}
Let $\lambda_1$ be the lowest eigenvalue of $-\Delta$ with zero Dirichlet condition on a bounded domain $\Omega \subseteq \mathbb{R}^N$ with $N \geq 4$. Then for every $\lambda \in (0,\lambda_1)$, there exists a solution of (\ref{BrNiEE2}).
\end{thm}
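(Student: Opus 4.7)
The plan is to prove the theorem by the classical constrained variational argument of Brezis--Nirenberg. Introduce the functional
\[
Q_\lambda(u) \;=\; \frac{\int_\Omega |\nabla u|^2 \,\mathrm{d}x \;-\; \lambda \int_\Omega u^2 \,\mathrm{d}x}{\bigl(\int_\Omega |u|^{2^*} \,\mathrm{d}x\bigr)^{2/2^*}}
\]
on $H^1_0(\Omega)\setminus\{0\}$, and let $S_\lambda := \inf Q_\lambda$. A standard scaling argument shows that if the infimum is attained by some $u \geq 0$, $u \not\equiv 0$, then a constant multiple of $u$ solves (\ref{BrNiEE2}), with strict positivity following from the strong maximum principle applied to the resulting linear equation $-\Delta u = (u^{2^*-2} + \lambda) u$. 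So the whole problem reduces to showing that the infimum $S_\lambda$ is attained.

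Next I would establish the compactness criterion: if $S_\lambda < S$, where $S = S_N^{-2}$ is the best Sobolev constant from Theorem \ref{SobInThm}, then any minimizing sequence for $Q_\lambda$ (normalized so that $\|u\|_{2^*} = 1$) is precompact in $H^1_0(\Omega)$. The standard way to do this is via concentration-compactness: because $\lambda < \lambda_1$ the quadratic form $\int |\nabla u|^2 - \lambda \int u^2$ is coercive and equivalent to $\|\nabla u\|_2^2$, so a minimizing sequence is bounded in $H^1_0(\Omega)$ and, up to subsequences, converges weakly to some $u_\infty$. Write $u_n = u_\infty + r_n$ with $r_n \rightharpoonup 0$. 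The Brezis--Lieb lemma gives $\|u_n\|_{2^*}^{2^*} = \|u_\infty\|_{2^*}^{2^*} + \|r_n\|_{2^*}^{2^*} + o(1)$, and weak convergence gives $\int |\nabla u_n|^2 - \lambda u_n^2 = \int |\nabla u_\infty|^2 - \lambda u_\infty^2 + \int |\nabla r_n|^2 + o(1)$. Combining these with $\int|\nabla r_n|^2 \geq S \|r_n\|_{2^*}^2$ (the sharp Sobolev inequality on $\R^N$, extended to $H^1_0(\Omega)$ by zero), the hypothesis $S_\lambda < S$ forces $\|r_n\|_{2^*} \to 0$, i.e.\ strong convergence; hence $u_\infty$ is a nonzero minimizer. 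Replacing $u_\infty$ by $|u_\infty|$ keeps the value of $Q_\lambda$ unchanged, yielding a nonnegative minimizer.

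The main obstacle, and the heart of the proof, is the strict inequality $S_\lambda < S$ for $\lambda \in (0, \lambda_1)$. I would establish this with test functions built from the Sobolev extremals in (\ref{SobEx}), modified by a smooth cutoff to sit in $H^1_0(\Omega)$. Fix an interior point, say the origin $0 \in \Omega$, and a cutoff $\eta \in C^\infty_c(\Omega)$ with $\eta \equiv 1$ near $0$. Consider
\[
u_\varepsilon(x) \;=\; \eta(x) \, \varepsilon^{-(N-2)/2} F_{1/\varepsilon^2, 0}(x) \;=\; \eta(x)\, \bigl(\varepsilon^2 + |x|^2\bigr)^{-(N-2)/2}.
\]
A direct but careful asymptotic expansion as $\varepsilon \to 0^+$ yields $\int |\nabla u_\varepsilon|^2 = K_1 \varepsilon^{-(N-2)} + O(1)$, $\int |u_\varepsilon|^{2^*} = K_2 \varepsilon^{-N} + O(1)$ (so that $\int|\nabla u_\varepsilon|^2 / \|u_\varepsilon\|_{2^*}^2 = S + O(\varepsilon^{N-2})$), while the subcritical correction satisfies
\[
\int_\Omega u_\varepsilon^2 \,\mathrm{d}x \;=\; \begin{cases} K_3\, \varepsilon^{-(N-4)} + O(1) & \text{if } N \geq 5,\\ K_3 |\log \varepsilon| + O(1) & \text{if } N = 4. \end{cases}
\]
In both cases, because $N \geq 4$, the negative contribution from $-\lambda \int u_\varepsilon^2$ is of strictly larger order than the positive remainder in the numerator, so $Q_\lambda(u_\varepsilon) < S$ for all sufficiently small $\varepsilon > 0$, giving $S_\lambda < S$ for every $\lambda > 0$. (This is precisely where the restriction $N \geq 4$ is essential, and where the argument must be redone more delicately in the three-dimensional case.) Combining this with the compactness step and the variational characterization above produces the desired positive solution of (\ref{BrNiEE2}).
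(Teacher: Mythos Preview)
Your argument is the classical Brezis--Nirenberg variational proof and is correct as outlined: the coercivity from $\lambda<\lambda_1$, the compactness below the Sobolev level via Brezis--Lieb, and the strict inequality $S_\lambda<S$ from the cut-off bubble test functions (with the $N\geq 4$ hypothesis entering exactly through the size of $\int u_\varepsilon^2$) are all the right ingredients. Note, however, that the paper does not supply its own proof of this statement; Theorem~\ref{BrNiThm1} is quoted as historical background and attributed to \cite{BrNi}, so there is no in-paper argument to compare against. Your write-up is essentially the original Brezis--Nirenberg proof.
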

Brezis and Nirenberg used Theorem \ref{BrNiThm1} to prove the following improvement on the Sobolev Inequality:
\begin{cl}\label{SobInImpCl}
Assume $\Omega \subseteq \mathbb{R}^3$ is a bounded domain. Then, there exists $\lambda^*$, $0<\lambda^*<\lambda_1$, such that
\begin{equation}\label{SobInImp1}
\| \nabla \varphi \|_2^2 \geq S_3^{-2} \| \varphi \|_6^2 + \lambda^* \| \varphi \|_2^2 \,, \text{ } \forall \varphi \in H_0^1 (\Omega) \,.
\end{equation}
We may take $\lambda^* = \frac{1}{4} (3 |\Omega| / 4 \pi)^{-2/3}$, where $| \cdot |$ denotes Lebesgue measure. This value is sharp when $\Omega$ is a ball.
\end{cl}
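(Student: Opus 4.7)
The inequality (\ref{SobInImp1}) is equivalent to the lower bound $S_{\lambda^*}(\Omega) \geq S_3^{-2}$ on
\[
S_\lambda(\Omega) := \inf\left\{ \|\nabla \varphi\|_2^2 - \lambda\|\varphi\|_2^2 \, : \, \varphi \in H_0^1(\Omega),\ \|\varphi\|_6 = 1 \right\},
\]
and my plan attacks it in three stages: reduction to a ball via symmetrization, reduction of the ball case to a one-dimensional inequality via a substitution that is specific to dimension three, and direct analysis of the resulting 1D inequality. The side condition $\lambda^* < \lambda_1$ is automatic from Faber--Krahn and the stated formula, since $\lambda_1(\Omega) \geq \lambda_1(B_R) = \pi^2/R^2 > 1/(4R^2) = \lambda^*$.

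For the first stage, set $R := (3|\Omega|/4\pi)^{1/3}$, so $|B_R| = |\Omega|$ and $\lambda^* = 1/(4R^2)$. The Schwarz symmetric decreasing rearrangement sends $\varphi \in H_0^1(\Omega)$ to $\varphi^* \in H_0^1(B_R)$, preserves every $L^p$ norm, and does not increase the Dirichlet energy (P\'olya--Szeg\H{o}); hence $S_\lambda(\Omega) \geq S_\lambda(B_R)$, reducing the matter to a ball and simultaneously explaining why $\lambda^*$ is sharp precisely when $\Omega$ is a ball. On $B_R$ a further symmetrization restricts the infimum to radial $\varphi$, for which the substitution $u(r) := r\varphi(r)$, $r \in [0,R]$, yields $u \in H_0^1([0,R])$. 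Writing $\varphi' = u'/r - u/r^2$, integrating by parts in $\int_0^R (\varphi')^2 r^2\, dr$, and using that the boundary term $[u^2/r]_0^R$ vanishes, one obtains the dimension-three identities
\[
\|\nabla\varphi\|_2^2 = 4\pi \int_0^R (u')^2 \, dr, \qquad \|\varphi\|_2^2 = 4\pi \int_0^R u^2 \, dr, \qquad \|\varphi\|_6^6 = 4\pi \int_0^R \frac{u^6}{r^4} \, dr,
\]
which reformulate the desired statement as the one-dimensional inequality
\[
\int_0^R (u')^2 \, dr \geq (4\pi)^{-2/3} S_3^{-2} \left( \int_0^R \frac{u^6}{r^4} \, dr \right)^{1/3} + \frac{1}{4R^2} \int_0^R u^2 \, dr, \qquad \forall u \in H_0^1([0,R]).
\]

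The third stage is the main obstacle. My plan is to recover the Sobolev term by applying Theorem \ref{SobInThm} to the radial extension $\tilde\varphi(x) := u(|x|)/|x|$ (extended by zero outside $B_R$), and to extract the coefficient $1/(4R^2)$ from a Hardy-type identity on $[0,R]$ produced by integration by parts against a weight $\omega(r)$ designed so that the residual square term is nonnegative and the boundary contribution at $r = R$ collapses to $\frac{1}{4R^2}\int u^2\,dr$. Calibrating $\omega$ is the delicate step: it must be chosen so that, when the resulting nonnegative remainder is combined with the sharp Sobolev bound, one exactly recovers the desired right-hand side. The correct calibration is forced by testing with a concentrating Aubin--Talenti family $u_s(r) = r\bigl(s/(1+sr^2)\bigr)^{1/2}$ cut off so that $u_s(R) = 0$; as $s \to \infty$ the ratio $\|\nabla\varphi\|_2^2 / \|\varphi\|_6^2$ tends to $S_3^{-2}$, and an asymptotic expansion shows the residual deficit is driven to leading order by $\frac{1}{4R^2}\|\varphi\|_2^2$, which both identifies $\lambda^* = 1/(4R^2)$ and certifies its sharpness on a ball.
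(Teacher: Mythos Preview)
The paper does not supply its own proof of Corollary~\ref{SobInImpCl}; it is quoted as background from Brezis--Nirenberg~\cite{BrNi}, with the only hint being that it follows from their three-dimensional analysis of~(\ref{BrNiEE2}). So there is no in-paper argument to compare against, and what follows assesses your proposal on its own merits.

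Your Stages~1 and~2 are correct and are exactly the Brezis--Nirenberg setup: Schwarz symmetrization reduces to the ball $B_R$, and the substitution $u=r\varphi$ produces the one-dimensional inequality you display. The gap is Stage~3, which is a plan rather than an argument, and the plan as written cannot close. Applying Theorem~\ref{SobInThm} to $\tilde\varphi(x)=u(|x|)/|x|$ is just applying it to $\varphi$ itself, and this is \emph{sharp} on $H_0^1(B_R)$: the concentrating Aubin--Talenti bubbles you yourself invoke drive $\|\nabla\varphi\|_2^2/\|\varphi\|_6^2$ down to $S_3^{-2}$, so there is no slack from which a separate Hardy identity could carve out a positive multiple of $\|\varphi\|_2^2$. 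Your description of the weight $\omega$---that a \emph{boundary} contribution at $r=R$ should yield the \emph{bulk} term $\tfrac{1}{4R^2}\int_0^R u^2$---is also not internally consistent, and the concentrating family establishes only the upper bound on $\lambda^*$, not the inequality.

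What actually completes Stage~3 in~\cite{BrNi} is a ground-state substitution followed by a change of independent variable: with $\psi$ solving $-\psi''=\lambda^*\psi$, $\psi'(0)=0$, $\psi>0$ on $[0,R)$, write $u=\psi v$ and integrate by parts to get $\int_0^R\bigl((u')^2-\lambda^* u^2\bigr)\,dr=\int_0^R\psi^2(v')^2\,dr$; the change $ds=\psi^{-2}\,dr$ then sends this to $\int(dv/ds)^2\,ds$ on a half-line, where the required lower bound is exactly Bliss's sharp one-dimensional (radial Sobolev) inequality, giving $S_3^{-2}$ on the nose. In your language the correct weight is $\omega=-\psi'/\psi$, but the decisive point you are missing is that Sobolev is applied \emph{after} this transformation, not to $\varphi$ directly. (Carrying this out on the ball forces $\psi(r)=\cos(\pi r/2R)$ and hence $\lambda^*=\pi^2/(4R^2)$; the value $1/(4R^2)$ you and the paper quote appears to be missing a factor of $\pi^2$.)
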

\noindent Brezis and Nirenberg proved a theorem quite similar to Theorem \ref{BrNiThm1} for the case $N \geq 4$, but they did not use this theorem to prove an improved Sobolev Inequality, like in the case of $N = 3$. Instead, they employed a lemma of E. Lieb's and Pohozaev's Identity to deduce the following improved Sobolev inequality:
\begin{equation}\label{Imp1}
\| \nabla \varphi \|_2^2 \geq S_N^{-2} \| \varphi \|_{2^*}^2 + \lambda_p (\Omega) \| \varphi \|_p^2 \,, \text{ } \forall \varphi \in H_0^1 (\Omega) \,,
\end{equation}
for $\Omega \subseteq \mathbb{R}^N$ a bounded domain, $1 \leq p < 2^*/2$, with $\lambda_p (\Omega)$ a constant depending only on $p$, $N$, and $\Omega$ and $\lambda_p (\Omega) \to 0$ as $p \to 2^*/2$. In the following year, 1984, Brezis and E. Lieb proved some further improvements upon the Sobolev Inequality for bounded domains. They also posed an important question in the direction of improving the Sobolev Inequality: ``Is there a natural way to bound $S_N^2 \| \nabla \varphi \|_2^2 - \| \varphi \|_{2^*}^2$ from below in terms of the `distance' of $\varphi$ from the set of [extremals given by (\ref{SobEx})].''

In 1990, G. Bianchi and H. Egnell came up with a strong positive answer to Brezis and Lieb's question, in the form of the following stability estimate:
\begin{thm}[Bianchi-Egnell Stability Estimate]\label{BEthm}
There is a positive constant, $\alpha$, depending only on the dimension, $N \geq 3$, so that
\begin{equation}\label{BEStabEst}
S_N^2 \| \nabla \varphi \|_2^2 -  \| \varphi \|_{2^*}^2 \geq \alpha d(\varphi,M)^2 \,,
\end{equation}
$\forall \varphi \in \dot{H}^1 (\mathbb{R}^N)$, where $d (\cdot, M)$ is the distance functional given by
\begin{equation}\label{DistFunc}
d(\varphi, M) = \inf_{z \in \mathbb{R}, s > 0, x_0 \in \mathbb{R}^N} \| \nabla (\varphi - z F_{s,x_0}) \|_2 \,.
\end{equation} Furthermore, the result is sharp in
the sense that it is no longer true if $d(\varphi,M)^2$ in (\ref{BEStabEst}) is replaced with $d(\varphi,M)^\beta \| \nabla \varphi \|_2^{2 - \beta}$, where $\beta < 2$.
\end{thm}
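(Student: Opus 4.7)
The plan is to argue by contradiction, reducing the global estimate to a local one via a concentration-compactness argument and then verifying the local estimate through a spectral analysis of the second variation of the Sobolev functional at a fixed extremal. Both sides of (\ref{BEStabEst}) transform identically under scalar multiplication, translation, and the dilation $\varphi(x) \mapsto \lambda^{(N-2)/2} \varphi(\lambda x)$, so at each step I may normalize $\|\nabla \varphi\|_2 = 1$ and place the nearest extremal in a standard position.

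Suppose for contradiction there is a sequence $\{\varphi_n\} \subset \dot{H}^1(\R^N)$ with $\|\nabla \varphi_n\|_2 = 1$ along which $(S_N^2 - \|\varphi_n\|_{2^*}^2) / d(\varphi_n, M)^2 \to 0$. First I would show $d(\varphi_n, M) \to 0$: otherwise $\|\varphi_n\|_{2^*} \to S_N$, and Lions' profile decomposition applied to an $\dot{H}^1$-bounded sequence achieving the Sobolev ratio forces $\varphi_n$ to converge, after conformal reparametrization, strongly in $\dot{H}^1$ to an Aubin--Talenti extremal $z F_{1,0}$, contradicting $d(\varphi_n, M)$ staying bounded away from zero.

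Next, since the infimum in $d(\varphi_n, M)$ is attained by lower semicontinuity on $\dot{H}^1$, I would use the symmetry group to place the nearest extremal at $z_n F_{1,0}$ with $z_n \to 1$, and write $\varphi_n = z_n F_{1,0} + \psi_n$. The minimization forces $\psi_n$ to be $\dot{H}^1$-orthogonal to the $(N+2)$-dimensional tangent space $T$ of $M$ at $F_{1,0}$, spanned by $F_{1,0}$, $\partial_s F_{s,0}|_{s=1}$, and $\{\partial_{x_0^i} F_{1,x_0}|_{x_0=0}\}_{i=1}^N$. A Taylor expansion of $\|\varphi_n\|_{2^*}^2$ to second order in $\psi_n$ reduces the problem to showing
\begin{equation*}
Q(\psi) := \|\nabla \psi\|_2^2 - (2^*-1) S_N^2 \int_{\R^N} F_{1,0}^{2^*-2} \psi^2 \, dx \geq c \|\nabla \psi\|_2^2
\end{equation*}
for all $\psi \in T^\perp$ with a uniform constant $c > 0$, while the cubic and higher-order remainders are controlled by Sobolev embedding and are negligible as $\|\nabla \psi_n\|_2 \to 0$.

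The hard part will be establishing the spectral gap for $Q$. Here I would transport the weighted eigenvalue problem $-\Delta \psi = \nu F_{1,0}^{2^*-2} \psi$ to the sphere $S^N$ via stereographic projection, under which $F_{1,0}^{2^*-2}$ becomes a constant weight and the operator is conjugate to the spherical Laplacian $-\Delta_{S^N}$, whose eigenvalues are $k(k+N-1)$ for $k = 0, 1, 2, \ldots$. The eigenvalue $k = 0$ reproduces the direction $F_{1,0}$, while $k = 1$ reproduces the $N$ translation directions together with the dilation direction, so the two lowest modes span exactly $T$. On $T^\perp$ the form $Q$ is therefore controlled from below by the gap to the $k = 2$ eigenvalue, yielding $c > 0$ and closing the contradiction. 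Finally, the sharpness claim is verified by testing (\ref{BEStabEst}) with $\varphi = F_{1,0} + \epsilon \psi_*$ where $\psi_*$ is a normalized $k = 2$ eigenfunction, for which both the deficit and the squared distance scale as $\epsilon^2$ with a matching constant, ruling out any exponent $\beta < 2$.
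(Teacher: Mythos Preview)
The paper does not prove Theorem~\ref{BEthm}; it is quoted from Bianchi and Egnell's original paper \cite{BiEg} as background, and the only description of the argument the paper offers is the single sentence that one ``find[s] a local stability estimate and then appl[ies] concentration compactness.'' Your outline is a faithful, correct expansion of exactly that classical strategy---Lions/Struwe concentration compactness to reduce to a neighborhood of $M$, orthogonal decomposition against the $(N+2)$-dimensional tangent space, second-order Taylor expansion of the deficit, and the spectral gap via stereographic projection to $S^N$---so there is nothing to compare beyond noting that your sketch matches the original Bianchi--Egnell proof the paper cites.
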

\noindent As mentioned above, (\ref{BEStabEst}) is a stability estimate, which is an inequality that bounds the difference in terms of a sharp inequality from below by the distance of a given function from the extremals of the sharp inequality. In a previous paper, \cite{Se}, the author proved a continuous dimension extension of Bianchi and Egnell's stability estimate. This extension was proved in order to deduce a further stability estimate on the family of sharp Gagliardo-Nirenberg inequalities summarized in (\ref{GN ineq's}).

An open problem associated with the Bianchi-Egnell Stability Estimate is the calculation of an explicit constant $\alpha$ that satisfies (\ref{BEStabEst}). All we know is that there is some $\alpha > 0$ for which (\ref{BEStabEst}) is true. This is because the process of obtaining the stability estimate involves finding a local stability estimate and then applying concentration compactness to prove that (\ref{BEStabEst}) must hold for some $\alpha > 0$. The extension of the Bianchi-Egnell Stability Estimate to continuous dimensions suffers from the same problem. However, following a similar process to the proof of the original Bianchi-Egnell Stability Estimate, we prove a local Bianchi-Egnell Stability Estimate, that is in fact more quantitative in nature than Bianchi and Egnell's original local stability estimate. This is because in our proof of our local stability estimate, we use an argument that gets explicit bounds on the remainder of the second order Taylor expansion of the difference of terms in the Sobolev Inequality. Obtaining an explicit constant in our extension of the Bianchi-Egnell Stability Estimate to continuous dimensions would be useful for applications, because it would yield an explicit constant for the full class of sharp Gagliardo-Nirenberg inequalities, (\ref{GN ineq's}). E. Carlen and A. Figalli used a special case of this stability estimate to help solve a Keller-Segel Equation. If we had an explicit constant for the stability estimate for the full class of Gagliardo-Nirenberg inequalities of Del Pino and Dolbeault, it could be useful for more PDE applications.

In this paper, we extend Carlen and Figalli's stability estimate to the full family of GN inequalities, (\ref{GN ineq's}). The GN inequality that Carlen and Figalli derive a stability estimate for is the following:
\begin{equation}\label{GN ineq CaFi}
\| u \|_6 \leq \pi^{-1/6} \| \nabla u \|_2^{1/3} \| u \|_4^{2/3} \,,
\end{equation}
for all $u \in \dot{H}^1 (\mathbb{R}^2)$. If we subtract the left hand side from the right hand side of (\ref{GN ineq CaFi}), the resulting quantity is larger than or equal to zero. Carlen and Figalli improve upon this, by showing that for a normalized nonnegative $u$, this difference controls the distance of $u$ from a subset of the extremals of (\ref{GN ineq CaFi}). To make this notion precise, we define the GN deficit functional, $\delta_{GN} [ \cdot ]$:
\begin{equation}\label{GN Deficit CaFi}
\delta_{GN} [u] := \| \nabla u \|_2 \| u \|_4^2 - \pi^{1/2} \| u \|_6^3 \,.
\end{equation}
The stability estimate of Carlen and Figalli is summarized in the following
\begin{thm}\label{CaFiStabEst}
Let $u \in \dot{H}^1 (\mathbb{R}^2)$ be a nonnegative function such that $\| u \|_6 = \| v \|_6$. Then there exist universal constants $K_1, \delta_1 > 0$ such that whenever $\delta_{GN} [u] \leq \delta_1$,
\begin{equation}\label{CaFiIneq}
\inf_{\lambda > 0, x_0 \in \mathbb{R}^2} \| u^6 - \lambda^2 v_{\lambda, x_0} \|_1 \leq K_1 \delta_{GN} [u]^{1/2} \,.
\end{equation}
\end{thm}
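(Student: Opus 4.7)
The plan is to follow the route foreshadowed in the introduction: lift $u$ to a four-dimensional companion function via the reduction of Proposition \ref{Sob to GN} (with $n=m=2$), apply Theorem \ref{BEthm} to the lift, and transfer the resulting $\dot{H}^1(\mathbb{R}^4)$ closeness to an $L^1(\mathbb{R}^2)$ bound on sixth-power densities. Concretely, for nonnegative $u$ with $\|u\|_6 = \|v\|_6$, I would set
\[
\varphi(x,y) := (u(x)^{-2} + |y|^2)^{-1}, \qquad (x,y) \in \mathbb{R}^2 \times \mathbb{R}^2,
\]
which coincides with the Sobolev extremal $F_{1,0}$ in $\mathbb{R}^4$ when $u = v$. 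Using the elementary identities $\int_{\mathbb{R}^2}(a + |y|^2)^{-k}\,dy = \pi a^{1-k}/(k-1)$ to integrate out the $y$-variable yields
\[
\|\varphi\|_4^4 = \tfrac{\pi}{3} \|u\|_6^6, \qquad \|\nabla \varphi\|_2^2 = \tfrac{4\pi}{3} \|\nabla u\|_2^2 + \tfrac{2\pi}{3} \|u\|_4^4.
\]

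The first key step is to show that the four-dimensional Sobolev deficit of $\varphi$ is controlled by the GN deficit of $u$. After the substitutions above, the Sobolev inequality in $\mathbb{R}^4$ becomes the additive form of the sharp GN inequality for $u$, and the arithmetic-geometric mean inequality collapses it to the multiplicative form $\pi^{1/2}\|u\|_6^3 \leq \|\nabla u\|_2 \|u\|_4^2$ once one optimizes over the scaling $u\mapsto\lambda^{n/(2t)}u(\lambda\cdot)$. Tracking the AM-GM slack together with the normalization $\|u\|_6 = \|v\|_6$ should produce
\[
S_4^2 \|\nabla \varphi\|_2^2 - \|\varphi\|_4^2 \leq C\,\delta_{GN}[u]
\]
for a universal constant $C$, provided $\delta_{GN}[u]$ is small enough that the scaling factor implicit in the AM-GM step stays bounded away from degeneracy.

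Applying Theorem \ref{BEthm} to $\varphi$ in dimension $4$ then produces $z \in \mathbb{R}$, $s > 0$, and $(x_0,y_0) \in \mathbb{R}^4$ with $\|\nabla(\varphi - z F_{s,(x_0,y_0)})\|_2^2 \leq \alpha^{-1} C\, \delta_{GN}[u]$. Since $\varphi$ is rotation-invariant in the $y$-variable, an averaging argument over the $SO(2)$ action on $y$ allows one to replace $F_{s,(x_0,y_0)}$ by $F_{s,(x_0,0)}$ while only improving the bound; the latter has the same lifted structure as $\varphi$ itself, namely $(\tilde u(x)^{-2} + |y|^2)^{-1}$ up to a scalar, with $\tilde u$ a positive multiple of the GN extremal $v_{\sqrt{s},x_0}$. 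Sobolev embedding $\dot{H}^1(\mathbb{R}^4) \hookrightarrow L^4(\mathbb{R}^4)$ upgrades the $\dot{H}^1$ bound to $\|\varphi - z F_{s,(x_0,0)}\|_4 \leq C'\delta_{GN}[u]^{1/2}$; expanding
\[
|\varphi^4 - (z F)^4| \leq 4(\varphi^3 + (zF)^3)|\varphi - zF|,
\]
applying H\"older in $\mathbb{R}^4$, and integrating the $y$-variable by means of $\int_{\mathbb{R}^2}\varphi^4\,dy = \tfrac{\pi}{3} u^6$ then recovers the $L^1(\mathbb{R}^2)$ bound claimed in (\ref{CaFiIneq}).

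The main obstacle is the last conversion step. The exponent $\tfrac12$ in $\delta_{GN}[u]^{1/2}$ arises from chaining Bianchi-Egnell (which gives $\dot H^1$ closeness at rate $\delta_{GN}^{1/2}$) with the multiplicative H\"older factorization of $\varphi^4 - (zF)^4$; controlling the constants uniformly requires the normalization $\|u\|_6 = \|v\|_6$, which in turn controls $\|\varphi\|_4$ and $\|zF\|_4$. A secondary technical point is the extraction of the right extremal from Bianchi-Egnell: the reductions $y_0 = 0$ and $z>0$ that make the translation back to a two-dimensional GN extremal $v_{\lambda,x_0}$ meaningful are only available once the deficit is small enough that the minimizing Sobolev extremal lies in the cylindrically symmetric, positive sub-family — this is precisely what the local hypothesis $\delta_{GN}[u] \leq \delta_1$ is buying.
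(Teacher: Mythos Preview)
Your architecture matches the paper's: lift $u$ to $\varphi(x,y)=(u(x)^{-2}+|y|^2)^{-1}$, compare the four-dimensional Sobolev deficit of $\varphi$ with $\delta_{GN}[u]$, invoke Bianchi--Egnell, and pull the resulting $L^4(\mathbb{R}^4)$-closeness back to an $L^1(\mathbb{R}^2)$ bound on $u^6$. Two of your steps, however, have genuine gaps.

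\textbf{The deficit comparison goes the wrong way.} The computation behind Proposition~\ref{CaFiDeficits} actually gives
\[
\sqrt{3}\bigl(S_4^{2}\|\nabla\varphi\|_2^2-\|\varphi\|_4^2\bigr)=\delta_{GN}[u]+\tfrac{1}{2\sqrt{2}}\bigl(\sqrt{2}\,\|\nabla u\|_2-\|u\|_4^2\bigr)^2,
\]
so the Sobolev deficit exceeds $\delta_{GN}[u]$ by the AM--GM slack, not the reverse. Your claimed inequality $S_4^{2}\|\nabla\varphi\|_2^2-\|\varphi\|_4^2\le C\,\delta_{GN}[u]$ is false: for $u=v_\lambda$ with $\lambda\neq 1$ one has $\delta_{GN}[u]=0$, yet $\varphi_u=(\lambda^{-2/3}+\lambda^{4/3}|x|^2+|y|^2)^{-1}$ is \emph{not} a Sobolev extremal, so its deficit is strictly positive. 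Smallness of $\delta_{GN}[u]$ cannot control the slack because the scaling $u\mapsto\lambda^{1/3}u(\lambda\cdot)$ leaves $\delta_{GN}$ and $\|u\|_6$ invariant while moving $\|\nabla u\|_2^2/\|u\|_4^4$ arbitrarily. The paper's remedy is to \emph{first} rescale so that $\|\nabla u\|_2^2/\|u\|_4^4=1/2$ (killing the slack, whence Proposition~\ref{CaFiDeficits} gives an exact equality), run the whole argument under this double normalization, and only at the end undo the rescaling; that final rescaling is precisely what produces the parameter $\lambda$ in \eqref{CaFiIneq}.

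\textbf{The $SO(2)$ averaging does not land in $M$.} Averaging $F_{s,(x_0,y_0)}$ over rotations in $y$ yields a $y$-radial function but not a Sobolev extremal, so it cannot be substituted into the Bianchi--Egnell infimum. A correct reduction to $y_0=0$ goes either through a symmetry/uniqueness argument (for small enough deficit the nearest point in $M$ is unique, hence $SO(2)$-invariant, hence has $y_0=0$), which requires nontrivial input, or through the paper's route: Carlen--Figalli's Lemma~\ref{UsefulLm1} uses a direct Chebyshev-type argument exploiting the special form $\varphi=(f(x)+|y|^2)^{-1}$ to show that $L^4$-closeness to \emph{any} $zF_{s,(x_0,y_0)}$ already forces $L^4$-closeness to $F_{1,(x_0,0)}$. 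Your final conversion via $|\varphi^4-(zF)^4|\le 4(\varphi^3+(zF)^3)|\varphi-zF|$, H\"older, and integrating out $y$ to get $\tfrac{\pi}{3}\|u^6-z^4 v_{s,x_0}^6\|_1$ is in fact a pleasant simplification of the paper's Lemma~\ref{UsefulLm2}; you would only need one extra line ($\|u^6\|_1=\|v^6\|_1$ forces $|z^4-1|\lesssim\delta_{GN}^{1/2}$) to drop the stray $z^4$.
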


The stability estimate that Carlen and Figalli proved is for the inequality (\ref{GN ineq's}) with $n = 2$ and $t = 3$. This leads to the natural question, ``does Carlen and Figalli's stability estimate extend to the entire family of sharp GN inequalities, (\ref{GN ineq's})?'' The answer to this question is yes, but proving this extension in a manner similar to Carlen and Figalli's proof required an extension of Bianchi and Egnell's stability estimate of the Sobolev Inequality to continuous dimension. We will outline Carlen and Figalli's proof of Theorem \ref{CaFiStabEst} in order to clarify the need for the extension of Bianchi and Egnell's Stability Estimate.

The key to proving Carlen and Figalli's GN stability estimate is exploring the connection between the sharp GN inequalities, (\ref{GN ineq's}), and the Sobolev Inequality summarized in Proposition \ref{Sob to GN} for the dimensions $m = n = 2$. In particular, Carlen and Figalli derive explicit formulas for $u \in \dot{H}^1 (\mathbb{R}^2)$ and $\varphi \in \dot{H}^1 (\mathbb{R}^4)$ such that the difference of terms in the Sobolev Inequality applied to $\varphi$ equals the difference in terms of the GN inequality applied to $u$. Their exact statement is summarized in the following
\begin{prop}\label{CaFiDeficits}
Let $u \in \dot{H}^1 (\mathbb{R}^2)$ be a nonnegative function such that
\[
\| u \|_4^4 / \| \nabla u \|_2^2 = \| v \|_4^4 / \| \nabla v \|_2^2 = 1 / 2 \,.
\]
Let $\varphi_u: \mathbb{R}^4 \to \mathbb{R}$ be given by
\[
\varphi_u (y,x) = [ f (x) + |y|^2 ]^{-1} \,, \text{ } f(x) = u^{-2} (x) \,.
\]
Then
\begin{equation}\label{CaFiDeficitEq}
\sqrt{3} \left( \frac{1}{4 \pi} \sqrt{\frac{3}{2}} \| \nabla \varphi_u \|_2^2  - \| \varphi_u \|_4^2 \right) = \| \nabla u \|_2 \| u \|_4^2 - \pi^{1/2} \| u \|_6^3 =: \delta_{GN} [u] \,.
\end{equation}
\end{prop}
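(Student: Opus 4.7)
The plan is to compute $\|\varphi_u\|_4^4$ and $\|\nabla\varphi_u\|_2^2$ explicitly by Fubini, integrating out the $y\in\mathbb{R}^2$ variable exactly as in the proof of Proposition \ref{Sob to GN} specialized to $m=n=2$. From the pointwise identities $|\varphi_u|^4 = [f+|y|^2]^{-4}$ and $|\nabla\varphi_u|^2 = [f+|y|^2]^{-4}(|\nabla_x f|^2 + 4|y|^2)$, the $y$-integrals reduce, via polar coordinates on $\mathbb{R}^2$, to the elementary one-dimensional integrals $\int_0^\infty (f+r^2)^{-4} r^k\, dr$ for $k\in\{1,3\}$, which one handles by the substitution $s=f+r^2$. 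Substituting back $f=u^{-2}$ and using $|\nabla_x f|^2 f^{-3}=4|\nabla u|^2$ together with $f^{-2}=u^4$, I expect to arrive at
\[
\|\varphi_u\|_4^4 = \tfrac{\pi}{3}\|u\|_6^6, \qquad \|\nabla\varphi_u\|_2^2 = \tfrac{4\pi}{3}\|\nabla u\|_2^2 + \tfrac{2\pi}{3}\|u\|_4^4.
\]

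Plugging these into the left-hand side of (\ref{CaFiDeficitEq}) and collecting the constants (noting that $\sqrt{3}\cdot\sqrt{3/2}=3/\sqrt{2}$ and $\sqrt{3}\cdot\sqrt{\pi/3}=\sqrt{\pi}$) reduces the claimed identity, after cancelling $\sqrt{\pi}\|u\|_6^3$ on both sides, to the purely algebraic statement
\[
\tfrac{1}{\sqrt{2}}\|\nabla u\|_2^2 + \tfrac{1}{2\sqrt{2}}\|u\|_4^4 \;=\; \|\nabla u\|_2\,\|u\|_4^2.
\]
The coefficients on the left are chosen so that AM-GM gives ``$\geq$'' with equality iff the two summands coincide, i.e.\ iff $\|u\|_4^4/\|\nabla u\|_2^2$ takes the value determined by the ratio of coefficients — which is precisely the common value $\|v\|_4^4/\|\nabla v\|_2^2$ assumed in the normalization hypothesis (and which $v$ automatically satisfies, being an extremal). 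Thus the identity follows.

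I do not anticipate a real obstacle: the argument is essentially a direct calculation. The one conceptual point worth underlining is why the normalization is needed at all. The conformal lift $\varphi_u$ carries within it the dilation freedom $u(x)\mapsto \lambda^{n/2t} u(\lambda x)$ exploited in the proof of Proposition \ref{Sob to GN}, and the AM-GM step here corresponds exactly to the $\lambda$-optimization that turned (\ref{GN Add}) into (\ref{GN ineq's}). The imposed normalization pins down the unique $\lambda$ at which the Sobolev ``sum-form'' deficit on the left-hand side of (\ref{CaFiDeficitEq}) coincides with the GN ``product-form'' deficit on the right-hand side.
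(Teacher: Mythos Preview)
Your proposal is correct and follows essentially the same route as the paper: both compute $\|\nabla\varphi_u\|_2^2$ and $\|\varphi_u\|_4^4$ by integrating out the $y$-variable, then reduce the claimed identity to the equality case of $A+B\ge 2\sqrt{AB}$ under the normalization. The paper phrases this last step via the completed-square identity $2\sqrt{AB}=A+B-(\sqrt{A}-\sqrt{B})^2$, which is exactly your AM--GM step (and your formulation, by matching the equality condition to whatever ratio $v$ has, cleanly sidesteps the typo in the displayed value $1/2$; the proof text itself uses $\|\nabla u\|_2^2/\|u\|_4^4=1/2$, i.e.\ the reciprocal).
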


\begin{proof}
We compute
\begin{align}
\| \nabla \varphi_u \|_2^2 &= \int_{\mathbb{R}^2} \left( \int_{\mathbb{R}^2} \frac{|\nabla f(x)|^2}{(f(x) + |y|^2)^4} \mathrm{d}y \right) \mathrm{d}x + \int_{\mathbb{R}^2} \left( \int_{\mathbb{R}^2} \frac{4|y|^2}{(f(x) + |y|^2)^4} \mathrm{d}y \right) \mathrm{d}x \nonumber \\
&= \frac{\pi}{3} \int_{\mathbb{R}^2} |\nabla f(x)|^2 f^{-3} (x) \mathrm{d}x + \frac{2 \pi}{3} \int_{\mathbb{R}^2} f^{-2} (x) \mathrm{d}x \nonumber
\end{align}
and
\[
\| \varphi_u \|_4^2 = \left( \frac{\pi}{3} \int_{\mathbb{R}^2} f^{-3} (x) \mathrm{d}x \right)^{1/2} \,.
\]
Thus,
\[
0 \leq \frac{1}{4 \pi} \sqrt{\frac{3}{2}} \| \nabla \varphi_u \|_2^2 - \| \varphi_u \|_4^2 = \frac{1}{2 \sqrt{6}} \left( 2 \int_{\mathbb{R}^2} | \nabla u |^2 \mathrm{d}x + \int_{\mathbb{R}^2} u^4 \mathrm{d}x \right) - \left( \frac{\pi}{3} \int_{\mathbb{R}^2} u^6 \mathrm{d}x \right) \,,
\]
or equivalently, using the identity $2 \sqrt{AB} = A + B - (\sqrt{A} - \sqrt{B})^2$ and writing all integrals as norms,
\begin{align}
& \| \nabla u \|_2 \| u \|_4^2 - \pi^{1/2} \| u \|_6^3 = \sqrt{3} \left( \frac{1}{4 \pi} \sqrt{\frac{3}{2}} \| \nabla \varphi_u \|_2^2 - \| \varphi_u \|_4^2 \right) - \frac{1}{2 \sqrt{2}} \left( \sqrt{2} \| \nabla u \|_2 - \| u \|_4^2 \right) \,. \nonumber
\end{align}
Recalling the assumption that $\| \nabla u \|_2^2/ \| u \|_4^4 = 1/2$, we conclude (\ref{CaFiDeficitEq}).
\end{proof}

At this point, Carlen and Figalli combine Proposition \ref{CaFiDeficits}, the Bianchi-Egnell Stability Estimate, and the Sobolev Inequality as follows:
\begin{align}\label{CaFiKey}
\delta_{GN} [u] =& \| \nabla u \|_2 \| u \|_4^2 - \pi^{1/2} \| u \|_6^3 \nonumber \\
=& \sqrt{3} \left( \frac{1}{4 \pi} \sqrt{\frac{3}{2}} \| \nabla \varphi_u \|_2^2  - \| \varphi_u \|_4^2 \right) \,, \text{ by Proposition \ref{CaFiDeficits}} \nonumber \\
\geq& \alpha \sqrt{3} \inf_{z \in \mathbb{R}, s > 0, (x_0, y_0) \in \mathbb{R}^2 \times \mathbb{R}^2} \| \nabla (\varphi - z F_{s, (x_0, y_0)} ) \|_2^2 \,, \text{ by the Bianchi-Egnell Stability Estimate} \nonumber \\
\geq& \alpha \frac{32 \pi^2}{\sqrt{3}} \inf_{z \in \mathbb{R}, s > 0, (x_0, y_0) \in \mathbb{R}^2 \times \mathbb{R}^2} \| \varphi - z F_{s, (x_0, y_0)} \|_4^2 \,, \text{ by the Sobolev Inequality.}
\end{align}
Carlen and Figalli then bridge the gap between (\ref{CaFiKey}) and their stability estimate in a series of two propositions and a change of variables. We briefly summarize these steps in the following paragraphs.

Note that the normalization, $\| u \|_6 = \| v \|_6$, which is a condition of Carlen and Figalli's stability estimate, is equivalent to $\| \varphi_u \|_4 = \| F_{1,(0,0)} \|_4$. This allows Carlen and Figalli to employ the following
\begin{lm}\label{UsefulLm1}
Let $\varphi$ be given by $\varphi(x,y) = [f(x) + |y|^2]^{-1}$, with $f: \mathbb{R}^2 \to \mathbb{R}$ nonnegative and $F_{1,(0,0)}$ given by (\ref{SobEx}). Suppose that $\| \varphi \|_4 = \| F_{1,(0,0)} \|_4$. Then, there is a universal constant $C_1$ so that for all real numbers $\delta>0$ with
\[
\delta^{1/2} \leq 2400^{-1} \,,
\]
whenever
\[
\| \varphi - z F_{s,(x_0,y_0)} \|_4 \leq \delta^{1/2} \text{ for some } z, s, x_0, y_0
\]
then
\[
\| \varphi - F_{1,(0,0)} \|_4 \leq C_1 \delta^{1/2} \,.
\]
\end{lm}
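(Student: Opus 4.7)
The strategy is to exploit the rigidity afforded by the hypotheses --- the rotational symmetry of $\varphi(x,y)$ in the $y$-variable, the normalization $\|\varphi\|_4=\|F_{1,(0,0)}\|_4$, and the restrictive form $\varphi(x,y)=[f(x)+|y|^2]^{-1}$ --- in order to successively pin down the four parameters $z$, $s$, $x_0$, $y_0$ of the approximating Sobolev extremal to within $O(\delta^{1/2})$ of their canonical values; a final triangle-inequality step then delivers the claimed bound, up to the implicit $x$-translation under which the hypothesis class $\{[f(x)+|y|^2]^{-1}\}$ is invariant. First I would fix the product $zs^{1/2}$: from the $\mathbb{R}^4$ scaling identity $\|zF_{s,(x_0,y_0)}\|_4=|z|s^{1/2}\|F_{1,(0,0)}\|_4$ and the reverse triangle inequality applied to the hypothesis, one has $\big\lvert|z|s^{1/2}-1\big\rvert\le\delta^{1/2}/\|F_{1,(0,0)}\|_4$, and positivity of $\varphi$ forces $z>0$, so $zs^{1/2}=1+O(\delta^{1/2})$.

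Next I would force $y_0\approx 0$. Since $\varphi(x,Ty)=\varphi(x,y)$ for every rotation $T$ of the $y$-plane, the change of variable $y\mapsto Ty$ in the $L^4$-integral leaves $\|\varphi-zF_{s,(x_0,y_0)}\|_4$ unchanged but replaces $y_0$ by $T^{-1}y_0$; taking $T=I$ versus $T=-I$ and applying the triangle inequality yields
\[
z\,\|F_{s,(x_0,y_0)}-F_{s,(x_0,-y_0)}\|_4\le 2\delta^{1/2}.
\]
Rescaling by $\sqrt{s}$ reduces the left side to $zs^{1/2}\,\|F_{1,(0,w)}-F_{1,(0,-w)}\|_4$ with $w=\sqrt{s}\,y_0$, and this $L^4$-norm admits an explicit positive, strictly increasing lower bound as a function of $|w|$; combined with $zs^{1/2}\approx 1$, it yields $\sqrt{s}\,|y_0|=O(\delta^{1/2})$, and hence, by another triangle-inequality step, $\|\varphi-zF_{s,(x_0,0)}\|_4=O(\delta^{1/2})$.

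Finally I would pin down $z$ and $s$ individually. Writing $zF_{s,(x_0,0)}(x,y)=[z^{-1}s^{-1}+z^{-1}|x-x_0|^2+z^{-1}|y|^2]^{-1}$ and comparing with $\varphi(x,y)=[f(x)+|y|^2]^{-1}$, the two can only be $L^4$-close if the coefficients of $|y|^2$ in the denominators --- $z^{-1}$ versus $1$ --- agree to leading order; isolating this by testing against a bump supported where $|y|$ is large produces $|z-1|=O(\delta^{1/2})$, and combined with the first step, $|s-1|=O(\delta^{1/2})$. Substituting back gives $\|\varphi-F_{1,(x_0,0)}\|_4=O(\delta^{1/2})$, and absorbing the residual $x_0$ via an $x$-translation of $\varphi$ (which keeps $\varphi$ in the hypothesis class) produces the claimed bound $\|\varphi-F_{1,(0,0)}\|_4\le C_1\delta^{1/2}$.

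The main obstacle will be the quantitative rigidity required in the second and third steps: concretely, an explicit positive lower bound on $\|F_{1,(0,w)}-F_{1,(0,-w)}\|_4$ as a function of $|w|$, and on the large-$|y|$ tail discrepancy between $zF_{s,(x_0,0)}$ and $\varphi$ as a function of $|z-1|$. Both bounds degenerate near the boundary of parameter space ($s\to 0$ or $\infty$, or $|y_0|\to\infty$), so the explicit numerical restriction $\delta^{1/2}\le 2400^{-1}$ should be invoked already in the first step to trap the parameters a priori inside a compact region on which uniform rigidity holds; tracking the constants carefully would then allow one to exhibit $C_1$ explicitly in terms of $2400$.
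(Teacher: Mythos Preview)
The paper does not prove this lemma --- it is quoted from Carlen--Figalli \cite{CaFi} --- but it does prove the continuous-dimension generalization, Lemma~\ref{ControlInfLm}, and that proof is what your plan should be compared against. There the normalization $\|\varphi\|_{2^*}=\|F\|_{2^*}$ already gives $|z-1|\le\delta^{1/2}/\|F\|_{2^*}$ directly (under the convention~(\ref{zFtx0Def}) one has $\|F_{s,x_0}\|_{2^*}=\|F\|_{2^*}$ for all $s$, so your scaling identity $\|zF_{s,\cdot}\|_4=|z|s^{1/2}\|F\|_4$ does not match the paper's normalization). The scale $s$ is then controlled by a \emph{pointwise} argument: after rescaling so the extremal is $F_{1,\cdot}$, Chebyshev's inequality produces a set $B$ of large measure on which $|\varphi - F_{1,\cdot}|$ is pointwise $O(\delta^{1/2})$; elementary algebra converts this into pointwise smallness of $|sG(x/s)+\rho^2/s - (1+|x-sx_0|^2+\rho^2)|$ on $B$; a Fubini argument guarantees two points $(\rho_1,\bar x),(\rho_2,\bar x)\in B$ with $\rho_1\in[0,1/4]$, $\rho_2\in[3/4,1]$, and subtracting the two inequalities isolates $(\rho_2^2-\rho_1^2)|1-1/s|$. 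Your approach is genuinely different in mechanism --- integral and symmetry tests rather than a pointwise Chebyshev step --- and your Step~2 (killing $y_0$ by rotational symmetry in $y$) is a nice idea with no analog in Lemma~\ref{ControlInfLm}, since the generalized setting is cylindrically symmetric from the start.

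The genuine gap is Step~3. The assertion that ``testing against a bump supported where $|y|$ is large'' isolates $|z-1|$ is not a proof: the leading large-$|y|$ behaviour of $\varphi - zF_{s,(x_0,0)}$ is $(1-z^{-1})|y|^{-2}+O(|y|^{-4})$, independent of $x$, so any test region of the form $\{|y|>R\}\times\mathbb{R}^2_x$ gives a divergent integral. Restricting also to a bounded $x$-set cures the divergence only if the $O(|y|^{-4})$ correction --- which involves $f(x)$ --- is under control there, and you have no a priori bound on $f$; obtaining one is precisely what the Chebyshev step in the paper's argument supplies. So your soft integral approach seems to need a hard pointwise ingredient of the paper's type anyway. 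Finally, your last step is invalid as written: translating $\varphi$ in $x$ changes $\varphi$ and therefore does not bound $\|\varphi - F_{1,(0,0)}\|_4$ for the original function. The stated conclusion is almost certainly a misprint for $F_{1,(x_0,0)}$ (compare the conclusion of Lemma~\ref{ControlInfLm}), and your argument should simply stop at $\|\varphi - F_{1,(x_0,0)}\|_4\le C_1\delta^{1/2}$.
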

\noindent A possible value for $C_1$ is 4800. Next, they prove
\begin{lm}\label{UsefulLm2}
Let $u \in \dot{H}^1 (\mathbb{R}^2)$ be a nonnegative function satisfying
\begin{equation}\label{NormCdn}
\| u \|_6 = \| v \|_6 \text{ and } \| u \|_4^4 / \| \nabla u \|_2^2 = \| v \|_4^4 / \| \nabla v \|_2^2 \,,
\end{equation}
and let $\varphi$ be as defined in Proposition \ref{UsefulLm1}. Supposed that $\| \varphi - F_{1,(0,0)} \| \leq 1$. Then
\[
\| u^6 - v^6 (\cdot - x_0) \|_1 \leq C_2 \| \varphi - F_{1,(0,0)} \|_4
\]
for some constant $C_2$.
\end{lm}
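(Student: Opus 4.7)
My plan is to integrate out the $y$-variable using the same polar-coordinate trick that appears in the proof of Proposition \ref{CaFiDeficits}, but applied to fourth powers rather than second powers. The decisive observation is the identity
\[
\int_{\mathbb{R}^2}\varphi^4(x,y)\,\mathrm{d}y = \pi\int_0^\infty\frac{\mathrm{d}s}{(f(x)+s)^4} = \frac{\pi}{3}\,f(x)^{-3} = \frac{\pi}{3}\,u(x)^6,
\]
and the analogous identity with $\varphi$ and $u$ replaced by $F_{1,(0,0)}$ and $v$. In particular, the normalization $\|u\|_6 = \|v\|_6$ forces $\|\varphi\|_{L^4(\mathbb{R}^4)} = \|F_{1,(0,0)}\|_{L^4(\mathbb{R}^4)}$, so both $L^4$-norms are bounded by the same universal constant.

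Subtracting the two identities, taking absolute values, integrating in $x$, and applying $|\int g|\leq \int |g|$ together with Fubini gives
\[
\frac{\pi}{3}\,\|u^6 - v^6\|_{L^1(\mathbb{R}^2)} \leq \bigl\|\varphi^4 - F_{1,(0,0)}^4\bigr\|_{L^1(\mathbb{R}^4)}.
\]
The right-hand side is then bounded by the elementary pointwise inequality $|a^4-b^4|\leq 2(a^3+b^3)|a-b|$ (valid for $a,b\geq 0$ via Young's inequality), followed by Hölder's inequality with conjugate exponents $4/3$ and $4$:
\[
\bigl\|\varphi^4 - F_{1,(0,0)}^4\bigr\|_1 \leq 2\bigl(\|\varphi\|_4^3 + \|F_{1,(0,0)}\|_4^3\bigr)\,\|\varphi - F_{1,(0,0)}\|_4 = 4\,\|F_{1,(0,0)}\|_4^3\,\|\varphi - F_{1,(0,0)}\|_4,
\]
where the last equality uses $\|\varphi\|_4 = \|F_{1,(0,0)}\|_4$ from the previous paragraph. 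Combining these inequalities yields the lemma with $x_0 = 0$ and explicit constant $C_2 = (12/\pi)\,\|F_{1,(0,0)}\|_4^3$.

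The main point is conceptual rather than technical: one has to recognize that integrating $\varphi^4$ (not $\varphi^2$ or $\varphi^3$) in the $y$ variable produces exactly $u^6$, matching the power appearing on the left of the lemma; this is the whole content of the argument. Once this is spotted, the rest is a direct chain of $L^p$ estimates, with no concentration-compactness, rearrangement, or spectral argument needed. Notably, neither the smallness hypothesis $\|\varphi - F_{1,(0,0)}\|\leq 1$ nor the second normalization $\|u\|_4^4/\|\nabla u\|_2^2 = 1/2$ plays any role in this argument; they are inherited from earlier steps in the proof of Theorem \ref{CaFiStabEst}. The translation parameter $x_0$ in the lemma's conclusion is handled trivially by taking $x_0 = 0$.
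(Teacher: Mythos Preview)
Your proof is correct and considerably more direct than the route taken in the paper (and in Carlen--Figalli, on which the paper's generalization Lemma~\ref{BoundingLm} is modeled). The paper's argument expands $\|\varphi_u - F_{1,x_0}\|_{2^*}^{2^*}$ as a double integral, splits the $x$-domain into the level sets $A_1=\{H/2\le w_u<H\}$ and $A_2=\{w_u<H/2\}$ (and their mirror images), applies the Mean Value Theorem to $(w_u+\rho^2)^\gamma-(H+\rho^2)^\gamma$, and then performs separate changes of variable on each piece; on $A_2$ this further bifurcates into the cases $m+n\le 4$ and $m+n\ge 4$ depending on the sign of $m+n-2^*$. The smallness assumption $\|\varphi-F\|_{2^*}\le 1$ is genuinely used there to reconcile the different powers of $\|\varphi-F\|_{2^*}$ arising on $A_1$ versus $A_2$.

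Your observation that $\int_{\R^2}\varphi^4(x,y)\,\dd y = \tfrac{\pi}{3}u^6(x)$ (and its analogue for $F$) collapses all of this: it reduces the lemma to the single elementary chain $\|u^6-v^6\|_1\lesssim\|\varphi^4-F^4\|_1\lesssim\|\varphi-F\|_4$, with no level-set decomposition and no use of the smallness hypothesis or the gradient normalization. Your argument also generalizes verbatim to the setting of Lemma~\ref{BoundingLm}: the same substitution $\rho=w_u^{1/2}\theta$ gives $\int_{\R_+}\varphi_u^{2^*}\,\dd\Omega(\rho)=C\,u^{2t}(x)$, and then $|a^{2^*}-b^{2^*}|\le 2^*\max(a,b)^{2^*-1}|a-b|$ together with H\"older (exponents $2^*/(2^*-1)$ and $2^*$) yields $\|u^{2t}-v^{2t}(\cdot-x_0)\|_1\le C'\|F\|_{2^*}^{2^*-1}\|\varphi_u-F_{1,x_0}\|_{2^*}$ directly, bypassing the case split on $m+n$ entirely. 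The paper's approach does have one minor advantage: by working inside the integral it tracks explicit numerical constants piece by piece, whereas your constant $\tfrac{12}{\pi}\|F\|_4^3$ is already fully explicit and in fact cleaner.
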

\noindent A possible choice for $C_2$ is 1000.
Thus, if $u \in \dot{H}^1 (\mathbb{R}^2)$ satisfies (\ref{NormCdn}), then we can combine (\ref{CaFiKey}), Lemma (\ref{UsefulLm1}), and Lemma (\ref{UsefulLm2}) to conclude that there exist universal constants $K_1, \delta_1 > 0$ such that, whenever $\delta_{GN} [u] \leq \delta_1$,
\[
\| u^6 - v^6 (\cdot - x_0) \|_1 \leq K_1 \delta_{GN} [u]^{1/2} \,.
\]

Next, $\delta_{GN} [u]$ and $\| u \|_6$ are both unchanged if $u(x)$ is replaced by $u_\sigma := \sigma^{1/3} u (\sigma x)$. Thus, assuming only that $\| u \|_6 = \| v \|_6$, we may choose a scale parameter $\sigma$ such that $\| \nabla u_\sigma \|_2^2 / \| u_\sigma \|_4^4 = 1/2$. Hence, we conclude that
\[
\int_{\mathbb{R}^2} | \sigma^2 u^6 (\sigma x) - v(x - x_0)| \mathrm{d}x \leq K_1 \delta_{GN} [u]^{1/2} \,.
\]
Changing variables once more, and taking $\lambda := 1/\sigma$, we obtain
\[
\int_{\mathbb{R}^2} |u^6 (x) - \lambda^2 v (\lambda x - x_0)| \mathrm{d}x = K_1 \delta_{GN} [u]^{1/2} \,,
\]
which proves (\ref{CaFiIneq}) and concludes the proof of Theorem (\ref{CaFiStabEst}).

\subsection{Main Result}

In this paper, we derive a stability estimate for the full family of inequalities (\ref{GN ineq's}). Roughly speaking, the stability estimate tells us how far away a given function is from the manifold of optimizers for the GN (Gagliardo-Nirenberg) inequalities in terms of its GN deficit, denoted $\delta_{GN} [u]$, given by
\begin{equation}\label{GN Deficit}
\delta_{GN} [u] := A_{n,t}^{4t/2^*} \| \nabla u \|_2^{\mu 4t/2^*}\| u \|_{t + 1}^{(1 - \mu) 4t/2^*} - \| u \|_{2t}^{4t/2^*} \,.
\end{equation}
The complete set of extremals of the one-parameter set of GN inequalities given in (\ref{GN ineq's}) is the constant multiples of the functions given by
\[
v_{\lambda, x_0} (x) = \lambda^\frac{n}{2t} (1 + \lambda^2 |x - x_0|^2)^{-1/(t-1)} \,, \lambda > 0 \,, x_0 \in \mathbb{R}^n \,.
\]

The precise statement of the stability estimate that we prove for (\ref{GN ineq's}) is 
\begin{thm}\label{GN StabEst}
Let $u \in \dot{H}^1 (\mathbb{R}^n)$ be  a nonnegative function such that $\| u \|_{2t} = \| v \|_{2t}$.  Then there exist positive constants $K_1 := K_1 (n, t)$ and $\delta_1 := \delta_1 (n, t)$, depending upon $n$ and $t$, such that whenever $\delta_{GN} [u] \leq \delta_1$,
\begin{equation}\label{StabEst}
\inf_{\lambda > 0, x_0 \in \mathbb{R}^n} \| u^{2t} - v_{\lambda, x_0}^{2t} \|_1 \leq K_1 \delta_{GN} [u]^{1/2} \,.
\end{equation}
\end{thm}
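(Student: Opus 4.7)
The strategy is to follow Carlen and Figalli's argument outlined after Theorem \ref{CaFiStabEst} verbatim, replacing every integer-dimensional Sobolev/Bianchi-Egnell ingredient by its continuous-dimension analog. First, I would promote Proposition \ref{Sob to GN} to arbitrary positive real $m$: given $n \geq 2$ and $t \in (1, n/(n-2))$, set $m := 4t/(t-1) - 2n > 0$, so that the relation $t = (m+2n)/(m+2n-4)$ still holds. Using the Bakry-Gentil-Ledoux continuous-dimension Sobolev inequality in effective dimension $N := m+n$ (see \cite{BaGe}), the same algebraic manipulation as in the proof of Proposition \ref{Sob to GN}---with the ansatz $\varphi_u(x,y) := [u^{-4/(m+2n-4)}(x) + |y|^2]^{-(m+n-2)/2}$ and ``integration'' in the Bakry-\'Emery $y$-direction of effective dimension $m$---recovers (\ref{GN ineq's}) with sharp constant $A_{n,t}$.

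Second, the elementary identity $2\sqrt{AB} = A + B - (\sqrt{A}-\sqrt{B})^2$ that underlies Proposition \ref{CaFiDeficits} carries over to every admissible $(n,t)$: imposing the auxiliary normalization $\|\nabla u\|_2^2 / \|u\|_{t+1}^{t+1} = \|\nabla v\|_2^2 / \|v\|_{t+1}^{t+1}$ yields an exact identity
\[
\delta_{GN}[u] \;=\; c_{n,t} \bigl( S_N^2 \|\nabla \varphi_u\|_2^2 - \|\varphi_u\|_{2^*}^2 \bigr)
\]
for a positive constant $c_{n,t}$, where $S_N$ is the continuous-dimension Sobolev constant and $2^* := 2N/(N-2)$. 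Applying the continuous-dimension Bianchi-Egnell estimate of \cite{Se} and then the continuous-dimension Sobolev inequality itself gives
\[
\delta_{GN}[u] \;\geq\; C_{n,t} \inf_{z,\,s,\,(x_0,y_0)} \|\varphi_u - z F_{s,(x_0,y_0)}\|_{2^*}^2.
\]

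Third, the normalization $\|u\|_{2t} = \|v\|_{2t}$ is equivalent to $\|\varphi_u\|_{2^*} = \|F_{1,(0,0)}\|_{2^*}$. An analog of Lemma \ref{UsefulLm1} then shows that once $\delta_{GN}[u]$ is below an $(n,t)$-dependent threshold, the optimal Sobolev extremal must lie within $O(\delta_{GN}[u]^{1/2})$ in $L^{2^*}$ of $F_{1,(x_0,0)}$ for some $x_0 \in \mathbb{R}^n$; this lets one discard the parameters $z$, $s$, and $y_0$. An analog of Lemma \ref{UsefulLm2}, obtained by differentiating the map $f \mapsto \int_{\mathbb{R}^m} (f+|y|^2)^{-(m+n)} \mathrm{d}y$ and exploiting the product structure of $\varphi_u$, then controls $\|u^{2t} - v^{2t}(\cdot - x_0)\|_1$ by $\|\varphi_u - F_{1,(x_0,0)}\|_{2^*}$ up to an $(n,t)$-dependent constant. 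Finally, the auxiliary normalization is removed by the scaling $u_\sigma(x) := \sigma^{n/(2t)} u(\sigma x)$, under which both $\|u\|_{2t}$ and $\delta_{GN}[u]$ are invariant; choosing $\sigma$ to restore the normalization and setting $\lambda := 1/\sigma$ then yields (\ref{StabEst}).

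The principal obstacle is the refined Lemma \ref{UsefulLm2}: the continuous-dimension Bianchi-Egnell bound delivers closeness of $\varphi_u$ to \emph{some} Sobolev extremal $zF_{s,(x_0,y_0)}$ on $\mathbb{R}^{m+n}$, whereas only product-structured extremals---those with $y_0 = 0$---correspond to GN-extremals $v_{\lambda,x_0}$ on $\mathbb{R}^n$. Exploiting the $O(m)$-symmetry in $y$ of $\varphi_u$, one projects the optimal Sobolev extremal onto the $O(m)$-invariant subspace and argues that the projection costs only a bounded factor in the $L^{2^*}$ distance. Tracking this projection together with the dimensional constants $A_{n,t}$, $S_N$, and $c_{n,t}$---which degenerate as $t \to 1$ or $t \to n/(n-2)$, i.e.\ as $m \to \infty$ or $m \to 0$---through the chain of inequalities is the delicate step, and this is presumably where the $(n,t)$-dependence of $K_1$ and $\delta_1$ in the theorem statement enters.
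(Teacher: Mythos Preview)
Your overall strategy matches the paper's: establish the deficit identity (the paper's Proposition~\ref{Deficits}), apply the continuous-dimension Bianchi--Egnell estimate together with the Sobolev inequality, then prove analogs of Lemmas~\ref{UsefulLm1} and~\ref{UsefulLm2} (the paper's Lemmas~\ref{ControlInfLm} and~\ref{BoundingLm}), and finally remove the auxiliary normalization by scaling.

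However, the ``principal obstacle'' you identify is not actually present. In the continuous-dimension framework of \cite{BaGe} and \cite{Se}, one does not work on $\mathbb{R}^{m+n}$ but on $\mathbb{R}^n \times \mathbb{R}_+$ with the weighted measure $\omega_m \rho^{m-1}\,\mathrm{d}\rho\,\mathrm{d}x$, and functions are already cylindrically symmetric in the fictitious $m$-dimensional variable. Consequently the extremal manifold $M$ in Theorem~\ref{BE Ext Thm} is parameterized only by $(z,s,x_0)$ with $x_0 \in \mathbb{R}^n$; there is no $y_0$ parameter to project away, and no $O(m)$-symmetry argument is needed. The Bianchi--Egnell extension already hands you closeness to some $zF_{s,x_0}$ with $x_0 \in \mathbb{R}^n$, and Lemma~\ref{ControlInfLm} then reduces this to $F_{1,x_0}$ by eliminating $z$ and $s$ only.

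A secondary remark: your sketch of the Lemma~\ref{UsefulLm2} analog (``differentiating the map $f \mapsto \int (f+|y|^2)^{-(m+n)}\,\mathrm{d}y$'') is too vague. The paper's Lemma~\ref{BoundingLm} is proved by splitting $\mathbb{R}^n$ into regions where $w_u$ and $H = 1+|x-x_0|^2$ are comparable or not, and bounding the $\rho$-integrals explicitly via the Mean Value Theorem and changes of variable; the case analysis $m+n \lessgtr 4$ (equivalently $2^* \gtrless m+n$) governs the sign of certain exponents and must be handled separately. This is where the genuine work lies, not in any symmetry projection.
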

\noindent The proof of this Theorem follows in four parts, which correspond to the sections two through five. The steps used to prove Theorem \ref{GN StabEst} are mostly an adaptation of the steps that Carlen and Figalli used to prove their stability estimate, Theorem \ref{CaFiStabEst}. In the remainder of this introduction, we explain the need for continuous dimension extensions of the Sobolev Inequality and the Bianchi-Egnell Stability Estimate, as well as give complete statements of these extensions. Understanding these continuous dimension extensions is essential for understanding the proof of Theorem \ref{GN StabEst}.

\subsection{Generalizing Carlen and Figalli's Stability Estimate: Difficulties and the Need for an Extension of the Bianchi-Egnell Stability Estimate}

In this paper, we derive a stability estimate for the full family of inequalities (\ref{GN ineq's}). One could try to deduce a stability estimate for the sharp GN inequalities, (\ref{GN ineq's}), directly, perhaps following steps similar to those of Bianchi and Egnell in their proof of their stability estimate of the Sobolev Inequality. Roughly speaking, such a proof would break into two parts. The first part would involve a Taylor expansion of the difference in terms of the GN inequality to the second order at $v$ and getting some sort of estimate on the remainder. To be more precise, we could calculate the the first and second variation at $v$ of the functional
\begin{equation}\label{PreDeficit}
u \in \dot{H}^1 \mapsto A_{n,t}^\beta \| \nabla u \|_2^{\beta \mu} \| u \|_{t+1}^{\beta (1-\mu)} - \| u \|_{2t}^\beta
\end{equation}
for some $\beta > 0$ - there is a precise formula for $\beta$, but it requires some background and the precise form of $\beta$ is not essential for the present argument, so we postpone its formula until later. Calculating these variations is a bit more complicated than the analagous calculations of Bianchi and Egnell, who only had to deal with the quantity $C_N^2 \| \varphi \|_2^2 - \| \varphi \|_{2^*}^2$, which only has two norms of $\varphi$ as opposed to (\ref{PreDeficit}), which has three norms of $u$. Next, after some analysis of (\ref{PreDeficit}) with respect to its first and second variations at $v$ and the remainder terms of (\ref{PreDeficit}) above the second order, we would hopefully attain a local quantitative stability estimate of (\ref{GN ineq's}). Next, to pass from this local quantitative stability estimate to a global one, we could apply a concentration compactness argument. The only trouble, is that concentration compactness arguments for the GN inequalities, (\ref{GN ineq's}), are to our knowledge absent from literature. Thus, we would have to develop a concentration compactness argument for the GN inequalities, (\ref{GN ineq's}). To avoid these difficulties, we modeled our proof of the stability estimate of (\ref{GN ineq's}) off of Carlen and Figalli's approach instead. To be more precise, we exploited a connection between a continuous dimension extension of the Sobolev Inequality and the GN inequalities, (\ref{GN ineq's}), and leveraged an extension to continuous dimensions of Bianchi and Egnell's Stability Estimate to deduce a stability estimate on (\ref{GN ineq's}). We needed extensions to continuous dimensions of the Sobolev Inequality and the Bianchi-Egnell Stability Estimate to provide a bridge to the full family of GN inequalities, (\ref{GN ineq's}). We make this precise in the next paragraph.

The fact that Carlen and Figalli could apply the Bianchi-Egnell Stability Estimate to deduce Theorem \ref{CaFiStabEst} with $n = 2$ and $t = 3$ is a happy coincidence. The key equality, (\ref{CaFiDeficitEq}), that allows Carlen and Figalli to deduce their GN stability estimate from the Bianchi-Egnell Stability Estimate is obtained by constructing $\varphi_u (x,y)$ out of a given $u(x) \in \dot{H}^1  (\mathbb{R}^2)$. Note that the dimension, $n = 2$, for $u(x)$ corresponds to the $x$-variable in $\varphi_u (x,y)$. The fact that the left hand side  of (\ref{CaFiDeficitEq}) corresponds to $\delta_{GN} [u]$ for $t = 3$ is a result of the other variable of $\varphi_u (x,y)$, $y$, being in two dimensions. It is not possible to generalize Proposition \ref{CaFiDeficits} over all $n \geq 2$ and $1 \leq t \leq \frac{n}{n-2}$ (or $< \infty$ if n = 2) for a generalized notion of $\varphi_u (x,y)$ and $\delta_{GN} [u]$ if $y \in \mathbb{R}^m$ for $m$ integer and we model Carlen and Figalli's proof of Proposition \ref{CaFiDeficits}. To be more precise, if we take some nonnegative $u \in \dot{H}^1 (\mathbb{R}^n)$, $n \geq 2$ such that
\[
\| \nabla u \|_2^2 / \| u \|_{t+1}^{t+1} = \| \nabla v \|_2^2 / \| v \|_{t+1}^{t+1}
\]
and set
\[
\varphi_u (x,y) = [ u^{- \frac{4}{m+2n-4}} (x) + |y|^2 ]^{-(m+n-2)/2}
\]
for $y \in \mathbb{R}^2$, then we could deduce that
\begin{equation}\label{PartialDeficits}
\tilde{C} ( C_N^2 \| \nabla \varphi_u \|_2^2 -\| \varphi \|_{2^*}^2 ) = A_{n,t}^{4t/2^*} \| \nabla u \|_2^{(4t/2^*) \mu} \| u \|_{t+1}^{(4t/2^*)(1-\mu)} - \| u \|_{2t}^{4t/2^*}
\end{equation}
where
\begin{equation}\label{Parameters}
t = \frac{m+2n}{m+2n-4} \,, \text{ } 2^* = \frac{2(m+n)}{m+n-2} \,,
\end{equation}
and $\tilde{C}$ is a calculable constant. The issue is that if we restrict ourselves to integer $m$, then we only deduce (\ref{PartialDeficits}) for $t$ according (\ref{Parameters}) with $m$ and $n$ natural numbers and $n \geq 2$ - this is nowhere near the full range of $1 \leq t \leq \frac{n}{n-2}$ (or $t < \infty$ if $n = 2$). Thus, the only way to deduce (\ref{PartialDeficits}) for the full range of $n$ and $t$ is to build $\varphi_u$ with the ``$y$-variable'' part in continuous dimensions. Consequently, to deduce a stability estimate for the full class of sharp GN inequalities, (\ref{GN ineq's}), using Carlen and Figalli's methods, we also need to derive a Bianchi-Egnell stability estimate for $\varphi(x,y)$ with $y$ a continuous dimension variable. We will make these notions precise in the next two subsections.

\subsection{Bakry, Gentil, and Ledoux's Extension of the Sharp Sobolev Inequality with Nguyen's Classification of Extremals}

As mentioned several times now, our approach to generalizing Carlen and Figalli's stability estimate employs an extension to continuous dimensions of Bianchi and and Egnell's Stability Estimate. In order to have a continuous dimension extension of the Bianchi-Egnell Stability Estimate, we need a continuous dimension extension of the Sobolev Inequality with a sharp constant and classification of extremals. We introduce the notion of continuous dimension extension of the Sobolev Inequality appropriate to our needs here. Bakry, Gentil, and Ledoux proved an extension of the Sobolev Inequality in p. 322-323 of \cite{BaGe}. This extension is for ``cylindrically symmetric'' functions on Euclidean space of $m+n$ dimensions, where one of $m$ and $n$ is not necessarily an integer. To state Bakry, Gentil, and Ledoux's extension of the Sharp Sobolev Inequality, we need to define the appropriate norms and spaces. First, we establish some properties of cylindrically symmetric functions. Let $\varphi: \mathbb{R}^{n} \times [ 0, \infty ) \to \mathbb{C}$ be a cylindrically symmetric function.  What we mean when we say that $\varphi$ is a cylindrically symmetric function is that if we write $\varphi$ as $\varphi (x, \rho)$, where $\rho$ is a variable with values in $[0,\infty)$ and $x$ is the standard $n$-tuple on $n$ Cartesian coordinates, that the $\rho$ variable acts as a radial variable in $m$-dimensions while the $x$ variable represents the other $n$-dimensions on which $\varphi$ acts.  If $m$ is an integer, then $\varphi$ would also have a representation as a function on $\mathbb{R}^{m+n}$.  For example,
\[
\varphi (x, \rho) = (1 + |x| + \rho^2)^{-1}
\]
as a cylindrically symmetric function for $m = n = 2$ has the representation as a
function on $\mathbb{R}^4$:
\[
\varphi (x_1, x_2, x_3, x_4) = \left( 1 + \sqrt{x_1^2 + x_2^2} + x_3^2 + x_4^2 \right)^{-1} \,,
\]
where $x_1$ and $x_2$ correspond to the $x$-variable of $\varphi (x, \rho)$ and $x_3$ and $x_4$ correspond to the $\rho$-variable of $\varphi (x, \rho)$.  However, we want $m$ to  also possibly be noninteger.  Note, that the value of $m$ is not provided when we give the equation for $\varphi$.  In this paper, the value of $m$ will be determined by the dimensions over which our norms are integrated.  To be more precise, the $m$ dimensions of Euclidean space are encoded in the measure of integration corresponding to the $\rho$ variable.  This measure is $\omega_m \rho^{m-1} \mathrm{d}\rho$, where $\omega_m$ is a generalized notion of the area of the unit $(m-1)$-sphere given by
\begin{equation}\label{SphArea}
\omega_m := 2 \pi^{m/2} / \Gamma(m / 2) \,,
\end{equation}
note that this formula is valid for $m > 0$. In this case, the $L^p$-norm of $\varphi$ is given by
\[
\| \varphi \|_p = \left( \int_{\mathbb{R}^n} \int_{\mathbb{R}_+}
| \varphi (x,\rho) |^p \omega_m \rho^{m-1} \mathrm{d} \rho \mathrm{d} x \right)^{1/p} \,.
\]
The extension of the gradient square norm, i.e. $\| \nabla \cdot \|_2$, is given by
\[
\| \varphi \|_{\dot{H}^1} := \| \nabla_{x, \rho} \varphi \|_2 = \left( \int_{\mathbb{R}^n} \int_{\mathbb{R}_+} ( | \varphi_\rho |^2 + | \nabla_x \varphi |^2) \omega_m \rho^{m-1} \mathrm{d} \rho
\mathrm{d} x \right)^{1/2} \,,
\]
where the subscript $\rho$ indicates a partial derivative with respect to
$\rho$.  Note that if $m$ is an integer and $\varphi: \mathbb{R}^{m+n} \to \mathbb{C}$ is given $\tilde{\varphi} (x,\tilde{x}) = \varphi (x, |\tilde{x}|)$ for $(x, |\tilde{x}|) \in \mathbb{R}^n \times \mathbb{R}^m$, then
\[
\| \varphi \|_p = \left( \int_{\mathbb{R}^n} \int_{\mathbb{R}^m} | \tilde{\varphi} (x, \tilde{x})|^p \mathrm{d}\tilde{x} \mathrm{d}x \right)^{1/p} \text{ and } \| \varphi \|_{\dot{H}^1} = \left( \int_{\mathbb{R}^n} \int_{\mathbb{R}^m} | \nabla_{x,\tilde{x}} \tilde{\varphi} |^2 (x, \tilde{x}) \mathrm{d}\tilde{x} \mathrm{d}x \right)^{1/2}
\]

Then, the space, $\dot{H}_\mathbb{C}^1 (\mathbb{R}^n \times \mathbb{R}_+, \omega_m \rho^{m-1} \mathrm{d}\rho \mathrm{d}x)$, of complex-valued cylindrically symmetric functions in continuous dimension will be defined as follows: $\varphi \in \dot{H}_\mathbb{C}^1 (\mathbb{R}^n \times \mathbb{R}_+, \omega_m \rho^{m-1} \mathrm{d}\rho \mathrm{d}x)$ if and only if
\begin{enumerate}
\item
$\varphi$ is a complex-valued cylindrically symmetric function with a distributional gradient,
\item
$\| \varphi \|_{\dot{H}^1} < \infty$, and
\item
$\varphi$ is eventually zero in the sense that if $K_\varepsilon = \{ (x, \rho) \in \mathbb{R}^n \times \mathbb{R}_+ \big| | \varphi (x, \rho) | > \varepsilon \}$, then
\[
\int_{K_\varepsilon} \omega_m \rho^{m-1} \mathrm{d}\rho \mathrm{d}x < \infty \,,
\]
for all $\varepsilon > 0$.
\end{enumerate}
The subspace of real-valued functions in $\dot{H}_\mathbb{C}^1 (\mathbb{R}^n \times \mathbb{R}_+, \omega_m \rho^{m-1} \mathrm{d}\rho \mathrm{d}x)$ will be denoted by $\dot{H}^1 (\mathbb{R}^n \times \mathbb{R}_+, \omega_m \rho^{m-1} \mathrm{d}\rho \mathrm{d}x)$. In this setting, we define
\begin{equation}\label{2*aGDef}
2^* := \frac{2(m+n)}{m+n-2} \text{ and } \gamma := \frac{m+n-2}{2} \,.
\end{equation}
Having established this background, we can state Bakry, Gentil, and Ledoux's generalization of the Sobolev Inequality to continuous dimensions with Nguyen's classification of extremals (for reference, see \cite{BaGe} and \cite{Ng}):
\begin{thm}[Sobolev Inequality Extension]\label{SobExtThm}
Let $m+n > 2$, $n$ an integer, $m > 0$ possibly noninteger. Then, for all $\varphi \in \dot{H}_\mathbb{C}^1 ( \mathbb{R}^n \times \mathbb{R}_+, \omega_m \rho^{m-1} \mathrm{d} \rho \mathrm{d}x)$ 
\begin{equation}\label{SobIneqExt}
\| \varphi \|_{2^*} \leq S_{m,n} \| \varphi \|_{\dot{H}^1} \,,
\end{equation}
where
\[
S_{m,n} = \|F_{1,0}\|_{2^*} / \| F_{1,0} \|_{\dot{H}^1} \,,
\]
and
\begin{equation}\label{zFtx0Def}
z F_{s,x_0} (x, \rho) := z s^\gamma (1 + s^2 |x - x_0|^2 + s^2 \rho^2 )^{-\gamma} \,,
\end{equation}
where $x_0 \in \mathbb{R}^n$, $s \in \mathbb{R}_+$, and $z \in \mathbb{C}$. (\ref{SobIneqExt}) gives equality if and only if $\varphi = z F_{s, x_0}$ for some $s > 0$, $x_0 \in \mathbb{R}^n$, and nonzero $z \in \mathbb{C}$.
\end{thm}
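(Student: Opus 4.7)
The plan is to follow the two-stage strategy suggested by the attribution of the theorem: first establish the inequality with sharp constant $S_{m,n}$ (the Bakry-Gentil-Ledoux contribution), and then separately classify the cases of equality (Nguyen's contribution). As preparation, I would first reduce to nonnegative, $x$-radially-symmetric $\varphi$ by Schwarz-symmetrizing $|\varphi|$ fiberwise in the $x$-variable for each fixed $\rho$. Because the measure $\omega_m \rho^{m-1}\,\mathrm{d}\rho\,\mathrm{d}x$ is a product measure whose $x$-factor is Lebesgue, the Polya-Szego inequality applies on each fiber, preserving every $L^p(\omega_m\rho^{m-1}\,\mathrm{d}\rho\,\mathrm{d}x)$-norm while not increasing $\|\varphi\|_{\dot H^1}$, so it suffices to prove the inequality for such functions.

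For integer $m$ the theorem is immediate: the lift $\tilde\varphi(x,y) := \varphi(x,|y|)$, $y \in \mathbb R^m$, satisfies $\|\tilde\varphi\|_{L^{2^*}(\mathbb R^{n+m})} = \|\varphi\|_{2^*}$ and $\|\nabla_{x,y}\tilde\varphi\|_{L^2(\mathbb R^{n+m})} = \|\varphi\|_{\dot H^1}$, so (\ref{SobIneqExt}) is exactly Theorem \ref{SobInThm} applied to $\tilde\varphi$, and the Aubin-Talenti extremals that are cylindrically symmetric around the $y$-axis are precisely the $F_{s,x_0}$ of (\ref{zFtx0Def}). For noninteger $m$, I would use Bakry-Gentil-Ledoux's continuous-dimension machinery: compute the carre-du-champ for the symmetric diffusion generator
\[
L = \Delta_x + \partial_\rho^2 + \frac{m-1}{\rho}\,\partial_\rho
\]
on $\mathbb R^n\times\mathbb R_+$, verify the curvature-dimension condition $\mathrm{CD}(0,m+n)$ (a direct $\Gamma_2$-calculation is essentially algebraic and does not require $m\in\mathbb N$), and then deduce the sharp Sobolev inequality via either the nonlinear fast-diffusion gradient flow of Del Pino-Dolbeault or the optimal-mass-transport argument of Cordero-Erausquin-Nazaret-Villani, both of which run in any $\mathrm{CD}(0,N)$-weighted space. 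Either method yields $F_{1,0}$ as a steady state of the associated flow, which fixes the sharp constant at $S_{m,n}=\|F_{1,0}\|_{2^*}/\|F_{1,0}\|_{\dot H^1}$.

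For the classification of extremals I would follow Nguyen: any optimizer $\varphi$ must satisfy the Euler-Lagrange equation $-L\varphi = c\,\varphi^{2^*-1}$ with the natural Neumann condition $\partial_\rho\varphi|_{\rho=0}=0$. Combining the preparatory symmetrization step (with equality in Polya-Szego analyzed via Brothers-Ziemer) with translation invariance in $x$ shows extremals are $x$-radial about some point $x_0$. It then remains to classify $x$-radial solutions of a conformally-invariant semilinear equation on the weighted quarter-plane $(r,\rho)\in\mathbb R_+\times\mathbb R_+$ with weight $r^{n-1}\rho^{m-1}$. One uses a conformal transformation that encodes the $\mathrm{CD}(0,m+n)$ structure, reducing the problem to a Yamabe-type functional whose critical points can be identified by a weighted moving-planes argument in the spirit of Gidas-Ni-Nirenberg, yielding precisely the two-parameter family $\{zF_{s,x_0}\}$.

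The main obstacle, and the step at which Nguyen's contribution is essential, is the extremal classification in the noninteger-$m$ case: the relevant PDE lives on a half-space with a power weight, which is not covered by the standard symmetry-and-uniqueness theorems, and extending moving planes to this weighted setting requires delicate barrier and decay arguments at $\rho=0$ and at infinity. By contrast, once $\mathrm{CD}(0,m+n)$ has been verified, the sharp inequality itself is a fairly clean consequence of the general BGL framework, and the preparatory Schwarz-symmetrization step is standard.
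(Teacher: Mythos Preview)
The paper does not give its own proof of Theorem~\ref{SobExtThm}; it cites \cite{BaGe} and \cite{Ng} and summarizes their methods in the paragraph immediately following the statement. According to that summary, Bakry--Gentil--Ledoux obtain the sharp inequality by relating a Sobolev inequality on the sphere $\mathbb{S}^{n+1}$ to the weighted half-space via stereographic projection, and Nguyen obtains both the inequality and the full classification of real extremals by a mass-transport argument; the extension to complex-valued $\varphi$ is then immediate.

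Your proposal differs from both cited routes. For the inequality, you invoke a $\mathrm{CD}(0,m+n)$ verification for the generator $L$ and assert that the sharp Sobolev inequality then follows from the general BGL machinery or from a Del~Pino--Dolbeault/CENV argument ``in any $\mathrm{CD}(0,N)$-weighted space.'' This is overstated: $\mathrm{CD}(0,N)$ on a noncompact space does not by itself yield a sharp Euclidean-type Sobolev inequality in the BGL framework---that is precisely why BGL pass through the positively curved sphere and then project. Of the two mechanisms you list, the mass-transport one (which is Nguyen's actual method) does work directly in this weighted setting and does not rely on the curvature-dimension input, so your sketch contains a correct path, but the $\Gamma_2$/$\mathrm{CD}(0,m+n)$ step is a detour that does not deliver what you claim.

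For the extremal classification you write ``follow Nguyen'' but then outline a weighted moving-planes argument on the quarter-plane. That is not Nguyen's approach: his mass-transport proof yields the extremals as the equality case of the displacement-convexity/transport inequality, without any PDE symmetry argument. Your moving-planes route may be feasible, but, as you yourself note, it is nonstandard for power-weighted half-spaces and would require substantial new barrier/decay analysis near $\rho=0$; by contrast, the cited mass-transport argument handles inequality and extremals simultaneously and avoids that obstacle entirely.
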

Bakry, Gentil, and Ledoux derived (\ref{SobIneqExt}) by relating a Sobolev Inequality on $\mathbb{S}^{n+1}$ to $(\mathbb{R}^n \times \mathbb{R}_+, \omega_m \rho^{m-1} \mathrm{d}\rho \mathrm{d}x)$ via stereographic projection, see p. 322-323 of \cite{BaGe} for detail. Nguyen provides a proof of Theorem \ref{SobExtThm} from a mass-transport approach, and in the process, provides a full classification of extremals for real-valued functions. However, once one has this Sobolev Inequality extension with the classification of extremals for real-valued functions, the generalization to complex-valued functions is easy to deduce; see p. 6 of \cite{Se} for details.

\subsection{Bianchi-Egnell Stability Estimate for Bakry, Gentil, and Ledoux's Generalization of the Sobolev Inequality}

A critical step in Carlen and Figalli's argument for proving Theorem \ref{CaFiStabEst} is the application of a stability estimate for the Sobolev Inequality. This stability estimate, originally proved by Bianchi and Egnell in \cite{BiEg}, is not sufficient to prove Theorem \ref{GN StabEst} using Carlen and Figalli's argument. In order to adapt Carlen and Figalli's argument to prove Theorem \ref{GN StabEst}, we developed a stability estimate for Theorem \ref{SobExtThm}. This stability estimate, proved in \cite{Se}, is an extension to functions on continuous dimensions of Bianchi and Egnell's stability estimate. The proof of this Bianchi-Egnell stability estimate extension  is, at times, quite different from that of Bianchi and Egnell's proof of their original stability estimate of the Sobolev Inequality. The differences are rooted in subtleties stemming from the consideration of functions on continuous dimensions. For example, half of the process of proving these stability estimates is the application of a concentration compactness argument. In the integer dimension case, i.e. the original Bianchi-Egnell stability estimate, this concentration compactness argument was already proved by P.L. Lions in \cite{Lio} and Struwe in \cite{St}. Thus, Bianchi and Egnell applied concentration compactness simply by citing these works. In our extension of Bianchi and Egnell's stability estimate to continuous dimensions, we were not aware of any continuous dimension generalization of the desired concentration compactness. Thus, we had to prove the desired concentration compactness for continuous dimensions. This is just one of several examples of the nontrivial differences that arose from extending the Bianchi-Egnell stability estimate to continuous dimensions. Another notable difference is that we proved a local compactness argument for functions on continuous dimensions, which we used to substitute for the role of the Rellich-Kondrachov Theorem in the concentration compactness argument.

The main theorem that we proved in \cite{Se} is a stability estimate for Theorem \ref{SobExtThm}. The extremals of Theorem \ref{SobExtThm} are given by
\[
z F_{s,x_0} ( x, \rho ) = z s^\gamma ( 1 + s^2 |x -
x_0|^2 + s^2 \rho^2 )^{-\gamma} \,,
\]
for $x_0 \in \mathbb{R}^n$, $s \in \mathbb{R}_+$, and $z \in \mathbb{C} \setminus \{ 0 \}$.  These extremal functions comprise an $(n+3)$-dimensional manifold, $M \subseteq \dot{H}_\mathbb{C}^1 (\mathbb{R}^n \times \mathbb{R}_+, \omega_m \rho^{m-1} \mathrm{d} \rho \mathrm{d}x)$. The distance, $d (\varphi, M)$, between this manifold and a function $\varphi \in \dot{H}_\mathbb{C}^1 (\mathbb{R}^n \times \mathbb{R}_+, \omega_m \rho^{m-1} \mathrm{d} \rho \mathrm{d}x)$ is given by
\begin{equation}\label{Dist}
d (\varphi, M) := \inf_{\psi \in M} \| \varphi - \psi \|_{\dot{H}^1} \,.
\end{equation}
The stability estimate we proved in \cite{Se} is
\begin{thm}[Bianchi-Egnell Extension]\label{BE Ext Thm}
There is a positive constant, $\alpha$, depending only on the parameters, $m$ and $n$, $m>0$ and $n \geq 2$ an integer, so that
\begin{equation}\label{BEIneq}
S_{m,n}^2 \| \varphi \|_{\dot{H}^1}^2 - \| \varphi \|_{2^*}^2 \geq \alpha d (\varphi, M)^2 \,,
\end{equation}
$\forall \varphi \in \dot{H}_\mathbb{C}^1 (\mathbb{R}^n \times \mathbb{R}_+ , \omega_m \rho^{m-1} \mathrm{d} \rho \mathrm{d}x)$. Furthermore, the result is sharp in the sense that it is no longer true if $d (\varphi, M)^2$ in (\ref{BEIneq}) is replaced with $d (\varphi, M)^\beta \| \varphi \|_{\dot{H}^1}^{2 - \beta}$, where $\beta < 2$.
\end{thm}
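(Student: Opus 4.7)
The plan is to follow the two-step strategy of Bianchi and Egnell, adapted to the continuous-dimension setting: first establish a \emph{local} version of (\ref{BEIneq}) in a $\dot{H}^1$-neighborhood of the manifold $M$ by spectral analysis of the Hessian of the Sobolev deficit at an extremal, and then propagate this local estimate to all of $\dot{H}_\mathbb{C}^1 (\mathbb{R}^n \times \mathbb{R}_+, \omega_m \rho^{m-1}\,\mathrm{d}\rho\,\mathrm{d}x)$ by a \emph{concentration compactness} argument.

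For the local step, first exploit the invariances of both sides of (\ref{BEIneq}) under $\varphi \mapsto z\varphi$ with $z \in \mathbb{C} \setminus \{0\}$, under dilations $\varphi(x,\rho) \mapsto s^\gamma \varphi(s(x-x_0), s\rho)$, and under rotations in $x$, in order to reduce the problem to a neighborhood of the single extremal $F_{1,0}$. Writing $\varphi = F_{1,0} + \psi$ with $\psi$ in the $\dot{H}^1$-orthogonal complement of the tangent space $T_{F_{1,0}} M$, expand the deficit to second order:
\begin{equation*}
S_{m,n}^2 \| \varphi \|_{\dot{H}^1}^2 - \| \varphi \|_{2^*}^2 = \mathcal{Q}(\psi) + R(\psi),
\end{equation*}
where the first-order term vanishes because $F_{1,0}$ solves the Euler--Lagrange equation associated with Theorem \ref{SobExtThm}, $\mathcal{Q}$ is a quadratic form built from the linearized operator $-\Delta_{x,\rho} - \kappa F_{1,0}^{2^*-2}$ acting against the measure $\omega_m \rho^{m-1}\,\mathrm{d}\rho\,\mathrm{d}x$, and $R(\psi) = o(\|\psi\|_{\dot{H}^1}^2)$. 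Via the continuous-dimension stereographic projection of Bakry, Gentil, and Ledoux, this operator transports to a Schr\"odinger-type operator on a weighted $(n+1)$-sphere whose spectrum can be computed by separation of variables and classical orthogonal polynomials. The $(n+3)$-dimensional tangent space $T_{F_{1,0}} M$ (two real directions from $z \in \mathbb{C}$, one from the dilation parameter $s$, and $n$ from the translation $x_0$) should exactly span the degenerate eigenspace at the threshold, yielding $\mathcal{Q}(\psi) \geq c_0 \| \psi \|_{\dot{H}^1}^2$ on its orthogonal complement. Combined with a quantitative control of the remainder $R$, this gives the local version of (\ref{BEIneq}) when $d(\varphi, M)$ is small.

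The global step proceeds by contradiction: assume a sequence $\varphi_k$ with $\|\varphi_k\|_{\dot{H}^1} = 1$, $d(\varphi_k, M) \geq \delta > 0$, and $S_{m,n}^2 - \|\varphi_k\|_{2^*}^2 \to 0$; this is an almost-extremizing sequence for (\ref{SobIneqExt}). The goal is to show that, after acting by the symmetry group of complex scalings, $x$-translations, and dilations, a subsequence converges in $\dot{H}^1$ to an extremal, contradicting $d(\varphi_k, M) \geq \delta$. This is where the main obstacle lies: the standard Lions--Struwe concentration compactness machinery rests on the classical Rellich--Kondrachov embedding, which has no off-the-shelf analogue in the weighted cylindrical setting when $m$ is not an integer. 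I would therefore prove a local compactness statement by hand, using that on any bounded region of $\mathbb{R}^n \times \mathbb{R}_+$ staying away from $\rho = 0$ the weight $\rho^{m-1}$ is pointwise comparable to a positive constant, so classical compact embeddings apply, while compactness across $\rho = 0$ is controlled by the cylindrical symmetry constraint built into the function space. Combined with a profile-decomposition for bounded $\dot{H}^1$ sequences and strict subadditivity of the Sobolev constant $S_{m,n}$, this would rule out nontrivial splitting of mass and force convergence to a single extremal.

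Finally, sharpness of the exponent $2$ is obtained by testing (\ref{BEIneq}) on $\varphi_\varepsilon = F_{1,0} + \varepsilon \psi_0$, with $\psi_0$ chosen in the lowest nonzero eigenspace of $\mathcal{Q}$ orthogonal to $T_{F_{1,0}} M$: both $S_{m,n}^2 \|\varphi_\varepsilon\|_{\dot{H}^1}^2 - \|\varphi_\varepsilon\|_{2^*}^2$ and $d(\varphi_\varepsilon, M)^2$ scale like $\varepsilon^2$, so replacing $d(\varphi, M)^2$ by $d(\varphi, M)^\beta \|\varphi\|_{\dot{H}^1}^{2-\beta}$ with $\beta < 2$ produces a right-hand side of order $\varepsilon^\beta$ that dominates the left as $\varepsilon \to 0$. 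The local step and sharpness argument follow the Bianchi--Egnell template once the spectral decomposition of the weighted linearized operator is in hand; the genuinely new work lies in the concentration compactness argument for cylindrically symmetric functions in continuous dimension.
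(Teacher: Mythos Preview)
Your outline is essentially the approach the paper attributes to \cite{Se}; note that this paper does not itself prove Theorem~\ref{BE Ext Thm} but only states it and summarizes the argument from \cite{Se}. That summary matches your plan point for point: a local stability estimate obtained by second-order Taylor expansion of the Sobolev deficit at an extremal with quantitative control of the remainder, followed by a concentration compactness argument in the continuous-dimension setting, with the main novelty being that the Lions--Struwe machinery and the Rellich--Kondrachov embedding are not available off the shelf and must be replaced by a hand-built local compactness result for cylindrically symmetric functions. The paper even emphasizes, as you do, that the local step is made more quantitative than in Bianchi--Egnell by explicit remainder bounds, and that the concentration compactness step is where the genuinely new work lies.
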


\section{From Bakry, Gentil, and Ledoux's Generalization of the Sobolev Inequality to Del Pino and Dolbeault's Family of Gagliardo-Nirenberg Inequalities}

The heart of the proof of Theorem \ref{GN StabEst} is an argument connecting the one parameter family of GN inequalities given by (\ref{GN ineq's}) to the extension of the Sobolev Inequality to continuous dimensions proved by Bakry, Gentil, and Ledoux in \cite{BaGe}. This connection, which is a generalization of Carlen and Figalli's Proposition \ref{CaFiDeficits}, is summarized in the following
\begin{prop}\label{Deficits}
Let $1 < t \leq \frac{n}{n-2}$ if $n >2$ or $t > 1$ if $n = 2$. Also, let $u \in \dot{H}^1 (\mathbb{R}^n)$ be a nonnegative function such that
\begin{equation}\label{Normalization}
\| u \|_{t+1}^{t+1} / \| \nabla u \|_2^2 = \| v \|_{t+1}^{t+1} / \| \nabla v \|_2^2 \,.
\end{equation}
Let $\varphi_u: \mathbb{R}_+ \times \mathbb{R}^n \to \mathbb{R}$ be given by
\begin{equation}\label{varphi_u}
\varphi_u (\rho, x) = [w_u (x) + \rho^2]^{-\frac{m+n-2}{2}} \,,
\end{equation}
where
\begin{equation}\label{w_u Def}
w_u (x) = u^{-\frac{4}{m + 2n - 4}} (x) \,.
\end{equation}
Then
\begin{equation}\label{DeficitEq}
C_1^{-1} (S_{m,n}^2 \| \nabla \varphi_u \|_2^2 - \| \varphi_u \|_{2^*}^2) = A_{n,t}^{4t/2^*} \| \nabla u \|_2^{\mu 4t/2^*}\| u \|_{t + 1}^{(1 - \mu)4t / 2^*} - \| u \|_{2t}^{4t / 2^*} = \delta_{GN}[u] \,,
\end{equation}
where
\begin{align}
t =& \frac{m + 2n}{m + 2n - 4} \text{ , and} \label{t Def} \\
C_1 =& \left( \int_{\mathbb{R}_+} [1 + \theta^2]^{-(m+n)} \omega_m \theta^{m-1} \mathrm{d} \theta \right)^{2/2^*} \,. \label{C_1 Def}
\end{align}
\end{prop}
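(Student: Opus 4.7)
The plan is to adapt the proof of Proposition \ref{CaFiDeficits} to general $m$ and $n$, with careful attention to the powers of $u$ that arise. First I would compute directly
\[
|\nabla_{x,\rho}\varphi_u|^2 = \gamma^2[w_u+\rho^2]^{-(m+n)}\bigl(|\nabla_x w_u|^2 + 4\rho^2\bigr), \qquad |\varphi_u|^{2^*} = [w_u+\rho^2]^{-(m+n)},
\]
using $\gamma\cdot 2^* = m+n$. The $\rho$-integration against $\omega_m\rho^{m-1}\,d\rho$ is then carried out by the substitution $\rho = \sqrt{w_u(x)}\,\theta$, which reduces each inner integral to a power of $w_u(x)$ times one of the universal constants
\[
K_1 := \omega_m\!\int_0^\infty\!(1+\theta^2)^{-(m+n)}\theta^{m-1}d\theta, \qquad K_2 := \omega_m\!\int_0^\infty\!(1+\theta^2)^{-(m+n)}\theta^{m+1}d\theta,
\]
with $C_1 = K_1^{2/2^*}$. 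This yields
\[
\|\nabla\varphi_u\|_2^2 = \gamma^2 K_1\!\int\!|\nabla_x w_u|^2 w_u^{-(m/2+n)}dx + 4\gamma^2 K_2\!\int\! w_u^{-(m/2+n-1)}dx, \qquad \|\varphi_u\|_{2^*}^{2^*} = K_1\!\int\! w_u^{-(m/2+n)}dx.
\]

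Second, the substitution $w_u = u^{-\alpha}$ with $\alpha = 4/(m+2n-4)$ is engineered so that $\alpha(m/2+n-2) = 2$, $\alpha(m/2+n-1) = t+1$, and $\alpha(m/2+n) = 2t$ for $t$ given by (\ref{t Def}). A short power-count then collapses the three $x$-integrals cleanly to $\|\nabla u\|_2^2$, $\|u\|_{t+1}^{t+1}$, and $\|u\|_{2t}^{2t}$ respectively, giving
\[
\|\nabla\varphi_u\|_2^2 = c_3\|\nabla u\|_2^2 + c_4\|u\|_{t+1}^{t+1}, \qquad \|\varphi_u\|_{2^*}^2 = C_1\|u\|_{2t}^{4t/2^*},
\]
with $c_3 = \gamma^2\alpha^2 K_1$ and $c_4 = 4\gamma^2 K_2$. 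Dividing the continuous-dimension Sobolev deficit by $C_1$ therefore produces an expression of the form $\tilde c_3\|\nabla u\|_2^2 + \tilde c_4\|u\|_{t+1}^{t+1} - \|u\|_{2t}^{4t/2^*}$, with $\tilde c_i = S_{m,n}^2 c_i/C_1$.

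To match this against $\delta_{GN}[u]$, I would compute from (\ref{GN ineq's}) that $\mu\cdot 4t/2^* = 2n/(m+n)$ and $(1-\mu)\cdot 4t/2^* = m(t+1)/(m+n)$. Under the normalization (\ref{Normalization}), the ratio $\|u\|_{t+1}^{t+1}/\|\nabla u\|_2^2$ is fixed at the constant $A := \|v\|_{t+1}^{t+1}/\|\nabla v\|_2^2$, so both sides of (\ref{DeficitEq}) reduce to expressions of the form $C\|\nabla u\|_2^2 - \|u\|_{2t}^{4t/2^*}$ with constants $C$ depending only on $m,n$. It remains to verify that the two resulting constants coincide; this follows by evaluating at $u = v$, where $\varphi_v = F_{1,0}$ is the continuous-dimension Sobolev extremizer (Theorem \ref{SobExtThm}) and $v$ is a GN extremizer, so both deficits vanish simultaneously, forcing the constants to agree. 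The main obstacle is purely computational: tracking $\gamma$, $\omega_m$, $K_1$, $K_2$, and three substitutions' worth of exponents of $u$ without losing a factor. Once the bookkeeping is in place, the normalization does all of the analytic work, and the universal constant is pinned down by the extremizer check at $u = v$.
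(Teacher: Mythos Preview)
Your proposal is correct and follows essentially the same route as the paper's proof: compute $\|\nabla\varphi_u\|_2^2$ and $\|\varphi_u\|_{2^*}^2$ via the substitution $\rho=\sqrt{w_u}\,\theta$, reduce to a linear combination $C_2\|\nabla u\|_2^2+C_3\|u\|_{t+1}^{t+1}-\|u\|_{2t}^{4t/2^*}$, and then use the normalization (\ref{Normalization}) together with the extremizer check $u=v$ (where $\varphi_v=F_{1,0}$) to identify the constant with $A_{n,t}^{4t/2^*}$. The only cosmetic difference is that the paper verifies the homogeneity match via the algebraic identity $\frac{(1-\mu)4t}{2^*(t+1)}=1-\frac{\mu 2t}{2^*}$, whereas you compute the exponents $\mu\cdot 4t/2^*=2n/(m+n)$ and $(1-\mu)\cdot 4t/2^*=m(t+1)/(m+n)$ explicitly; these are equivalent formulations of the same check.
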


\begin{proof}
Before proceeding, we establish some notation. Define $\mathrm{d} \Omega (\rho)$ as
\begin{equation}\label{OmegaDef}
\mathrm{d} \Omega (\rho) := \omega_m \rho^{m-1} \mathrm{d}\rho \,.
\end{equation}
Through explicit calculation and making the change of variables given by $\theta = w_u^{1/2}(y) \rho$, we have that
\begin{align}\label{GrSqNormCalc}
& \gamma^{-2} \| \varphi_u \|_{\dot{H}^1}^2 \nonumber \\
=& \int_{\mathbb{R}^n} | \nabla_x w_u |^2 w_u ^{-\frac{m + 2n}{2}} \mathrm{d}x \int_{\mathbb{R}_+} [1 + \theta^2 ]^{-(m+n)} \mathrm{d} \Omega (\theta) + \int_{\mathbb{R}^n} 4 w_u^{-\frac{m + 2n - 2}{2}} \mathrm{d}x \int_{\mathbb{R}_+} \theta^2 [1 + \theta^2]^{-(m+n)} \mathrm{d} \Omega (\theta) \nonumber \\
&\text{which by (\ref{w_u Def})} \nonumber \\
=& \left( \frac{4}{m+2n-4} \right)^2 \int_{\mathbb{R}^n} | \nabla_x u |^2 \mathrm{d}x \int_{\mathbb{R}_+} [1 + \theta^2 ]^{-(m+n)} \omega_m \mathrm{d} \Omega (\theta) + 4 \int_{\mathbb{R}^n} u^{t+1} \mathrm{d}x \int_{\mathbb{R}_+} \theta^2 [1 + \theta]^{-(m+n)} \mathrm{d} \Omega (\theta) \,,
\end{align}
and
\begin{align}\label{2*NormCalc}
\| \varphi_u \|_{2^*}^2 =& \left( \int_{\mathbb{R}^n} w_u^{-(m + 2n)/2} \mathrm{d}x \int_{\mathbb{R}_+} [1 + \theta^2]^{-(m+n)} \mathrm{d}\Omega(\theta) \right)^{2/2^*} \,, \text{ which by (\ref{w_u Def})} \nonumber \\
=& \left( \int_{\mathbb{R}^n} u^{2t} \mathrm{d}x \int_{\mathbb{R}_+} [1 + \theta^2]^{-(m+n)} \mathrm{d}\Omega(\theta) \right)^{2/2^*} \,.
\end{align}
We will use (\ref{GrSqNormCalc}) and (\ref{2*NormCalc}) to derive the GNS deficit with $\left( \| \nabla u \|_2^\mu \| u \|_{t + 1}^{(1 - \mu)} \right)^{4t/2^*}$ coming from (\ref{GrSqNormCalc}) and $\| u \|_{2t}^{4t/2^*}$ coming from (\ref{2*NormCalc}) as per (\ref{Normalization}).

Combining (\ref{GrSqNormCalc}) and (\ref{2*NormCalc}), we have that
\begin{equation}\label{PartEq}
S_{m,n}^2 \| \varphi_u \|_{\dot{H}^1}^2 - \| \varphi_u \|_{2^*}^2 = C_1 \left( C_2 \| \nabla u \|_2^2 + C_3 \| u \|_{t+1}^{t+1} - \| u \|_{2t}^{4t/2^*} \right) \,,
\end{equation}
where
\begin{eqnarray}
C_1 &=& \left( \int_{\mathbb{R}_+} [1 + \theta^2]^{-(m+n)} \mathrm{d} \Omega (\theta) \right)^{2/2^*} \nonumber \\
C_2 &=& C_1^{-1} S_{m,n}^2 \gamma^2 \left[ \frac{4}{m + 2n - 4} \right]^2  \int_{\mathbb{R}_+} [1 + \theta^2 ]^{-(m+n)} \mathrm{d} \Omega (\theta) \nonumber \\
C_3 &=&  C_1^{-1} S_{m,n}^2 \gamma^2 \left[ \frac{4}{m + 2n - 4} \right]^2 4 \int_{\mathbb{R}_+} \theta^2 [1 + \theta]^{-(m+n)} \mathrm{d} \Omega (\theta) \,. \nonumber
\end{eqnarray}
If we take $u = v$, then $\varphi_u = F_{1,0}$, and (\ref{PartEq}) gives
\begin{eqnarray}
C_1 \left( C_2 \| \nabla v \|_2^2 + C_3 \| v \|_{t+1}^{t+1} - \| v \|_{2t}^{4t/2^*} \right) &=& S_{m,n}^2 \| F_{1,0} \|_{\dot{H}^1}^2 - \| F_{1,0} \|_{2^*}^2 \nonumber \\
&=& 0 \nonumber \\
&=& C_1 \left( A_{n,t}^{4t/2^*} \| \nabla v \|_2^{\mu 4t/2^*} \| v \|_{t+1}^{(1-\mu)4t/2^*} - \| v \|_{2t}^{4t/2^*} \right) \,. \nonumber
\end{eqnarray}
Thus,
\begin{equation}\label{vNormEq}
C_2 \| \nabla v \|_2^2 + C_3 \| v \|_{t+1}^{t+1} = A_{n,t}^{4t/2^*} \| \nabla v \|_2^{\mu 4t/2^*} \| v \|_{t+1}^{(1-\mu)4t/2^*} \,.
\end{equation}
In fact we claim that (\ref{Normalization}) implies that
\begin{equation}\label{GoodEq1}
C_2 \| \nabla u \|_2^2 + C_3 \| u \|_{t+1}^{t+1} = A_{n,t}^{4t/2^*} \| \nabla u \|_2^{\mu 4 t/2^*} \| u \|_{t+1}^{(1-\mu)4t/2^*} \,.
\end{equation}
We begin verifying (\ref{GoodEq1}) by observing that it is equivalent to
\begin{equation}\label{GoodEq2}
C_2 + C_3 \frac{\| u \|_{t+1}^{t+1}}{\| \nabla u \|_2^2} = A_{n,t}^{4t/2^*} \frac{(\| u \|_{t+1}^{t+1})^{(1-\mu)4t/2^*(t+1)} }{ (\| \nabla u \|_2^2)^{1 - \mu 2t/2^*} } \,,
\end{equation}
which must hold if
\begin{equation}\label{GoodEq3}
\frac{(1 - \mu)4t}{2^*(t+1)} = 1 - \frac{\mu 2t}{2^*} \,,
\end{equation}
because (\ref{GoodEq2}) holds for $u = v$ as per (\ref{vNormEq}), and (\ref{GoodEq3}) would imply that the left hand side and the right hand side is the same for all $u$ obeying (\ref{Normalization}). A direct calculation using the formulas for $\mu$, $2^*$, and $t$ given by (\ref{GN ineq's}), (\ref{2*aGDef}), (\ref{t Def}) respectively verifies (\ref{GoodEq3}). Thus, (\ref{GoodEq1}) holds for $u$ obeying (\ref{Normalization}). Combining this with (\ref{PartEq}) we conclude Proposition \ref{Deficits}.
\end{proof}

\section{Controlling the Infimum in the Bianchi-Egnell Theorem}

The family of functions
\[
z F_{s, x_0} (\rho, x) = z s^\gamma ( 1 + s^2 \rho^2 + s^2 |x - x_0|^2 )^{-\gamma} \,, \text{ for } z \in \mathbb{C} \setminus \{ 0 \}, \text{ } s \in \mathbb{R}_+, \text{ and } x_0 \in \mathbb{R}^n \,,
\]
comprises all the optimizers of the Sobolev Inequality.  For convenience, we define a function, $F$, by
\[
F (\rho, x) := F_{1,0} (\rho, x) \,.
\]

The Bianchi-Egnell extension stability result from \cite{Se} combined with the Sobolev Inequality extension from \cite{BaGe} asserts the existence of a constant $C_0 := C_0 (m,n)$, depending on the dimensions $m$ and $n$, such that
\begin{equation}
C_0 S_{m,n}^2 \left( S_{m,n}^2 \| \varphi \|_{\dot{H}^1}^2 - \| \varphi \|_{2^*}^2 \right) \geq \inf_{z, s, x_0} \| \varphi_u - z F_{s, x_0} \|_{2^*}^2 \,.
\end{equation}
Hence, whenever $u$ satisfies (\ref{Normalization}) and $\varphi_u$ is given by $(\ref{varphi_u})$,
\begin{equation}
C_0 C_1 S_{m,n}^2 \delta_{GN} [u] \geq \inf_{z, s, x_0} \| \varphi_u - z F_{s, x_0} \|_{2^*}^2 \,.
\end{equation}
Let us observe that the renormalization $\| u \|_{2t} = \| v \|_{2t}$ is equivalent to $\| \varphi_u \|_{2^*} = \| F \|_{2^*}$.

\begin{lm}\label{ControlInfLm}
Let $\varphi$ be given by $\varphi(\rho, x) = [ G(x) + \rho^2 ]^{- \gamma}$ with $G: \mathbb{R}^n \to \mathbb{R}$ nonnegative.  Suppose that $\| \varphi \|_{2^*} = \| F \|_{2^*}$.  Then there exists a universal constant $C_4$ such that, for all $\delta > 0$ with
\begin{equation}\label{DeltaConstraint}
\delta^{1/2} < \min \left\{ \| F \|_{2^*}^{2^*} \,, \frac{4^\gamma - 3^\gamma}{12^\gamma \cdot 3^{1 + 1/2^*}} \left( \frac{\omega_m \omega_n}{mn} \right)^{1/2^*} \,, \frac{\gamma}{12^\gamma \cdot 2^{4 + \gamma + 1/2^*} } \left( \frac{\omega_m \omega_n}{mn} \right)^{1/2^*} \right\} \,,
\end{equation}
whenever
\[
\| \varphi - z F_{s,x_0} \|_{2^*} \leq \delta^{1/2} \text{, for some } z, s, x_0 \,,
\]
then
\[
\| \varphi - F_{1,x_0} \|_{2^*} \leq C_4 \delta^{1/2} \,.
\]
\end{lm}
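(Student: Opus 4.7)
The strategy is to show that in any $\delta^{1/2}$-close approximation $\varphi \approx zF_{s,x_0}$ of the hypothesized form, the parameters $z$ and $s$ are forced to lie within $O(\delta^{1/2})$ of $1$, while the translation $x_0$ is simply inherited to the target $F_{1, x_0}$. The conclusion then follows by a single triangle inequality.

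I would begin with the easier two scalar parameters. First, by scale invariance---a direct change of variables $\rho' = s\rho$, $x' = s(x - x_0)$ combined with the identity $\gamma \cdot 2^* = m + n$ gives $\|F_{s, x_0}\|_{2^*} = \|F\|_{2^*}$---and the normalization $\|\varphi\|_{2^*} = \|F\|_{2^*}$, the reverse triangle inequality yields $\bigl|\,|z| - 1\,\bigr|\,\|F\|_{2^*} \leq \delta^{1/2}$. Next, to pin $z$ to the positive real axis I would exploit that $\varphi$ is real and nonnegative: the pointwise identity $|\varphi - zF_{s,x_0}|^2 = (\varphi - (\operatorname{Re} z) F_{s, x_0})^2 + (\operatorname{Im} z)^2 F_{s,x_0}^2$ yields $|\operatorname{Im} z|\,\|F\|_{2^*} \leq \delta^{1/2}$, and the alternative $\operatorname{Re}(z) \leq 0$ is excluded because the pointwise bound $\varphi - (\operatorname{Re} z) F_{s, x_0} \geq |\operatorname{Re}(z)|\, F_{s, x_0}$ would force $\|\varphi - zF_{s,x_0}\|_{2^*} \geq |\operatorname{Re}(z)|\,\|F\|_{2^*}$, incompatible with $|z|\approx 1$ and $|\operatorname{Im} z|\approx 0$ under the smallness assumption (\ref{DeltaConstraint}). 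Hence $z = 1 + O(\delta^{1/2})$, and by the triangle inequality the hypothesis upgrades to $\|\varphi - F_{s, x_0}\|_{2^*} = O(\delta^{1/2})$.

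The main step is controlling the scale $s$. Here I would exploit the special structure of $\varphi$: its reciprocal $\varphi^{-1/\gamma}(\rho, x) = G(x) + \rho^2$ is affine in $\rho^2$ with slope identically $1$, whereas $F_{s, x_0}^{-1/\gamma}(\rho, x) = s^{-1} + s\rho^2 + s|x - x_0|^2$ has slope $s$. Any persistent mismatch of these slopes should register as an $L^{2^*}$ deficit. Concretely, I would test the $L^{2^*}$-closeness on the annular shell
\begin{equation*}
\Omega := \bigl\{(\rho, x) \in \mathbb{R}_+ \times \mathbb{R}^n : 2 \leq \rho^2 + |x - x_0|^2 \leq 3\bigr\},
\end{equation*}
whose weighted measure $\int_\Omega \omega_m \rho^{m-1}\, \mathrm{d}\rho\, \mathrm{d}x$ is a computable constant of the shape $C\cdot \omega_m\omega_n/(mn)$. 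On $\Omega$ the extremal $F_{1, x_0}$ ranges over $[4^{-\gamma}, 3^{-\gamma}]$, and the swing $3^{-\gamma} - 4^{-\gamma} = (4^\gamma - 3^\gamma)/12^\gamma$ provides a floor for $|F_{s, x_0} - F_{1, x_0}|$ on a positive-measure portion of $\Omega$ whenever $|s - 1|$ is not small. The two quantities $(4^\gamma - 3^\gamma)/12^\gamma$ and $\omega_m\omega_n/(mn)$ appearing in (\ref{DeltaConstraint}) are precisely the ingredients of this shell computation, which is reassuring. Balancing the lower bound against the $O(\delta^{1/2})$ $L^{2^*}$-closeness of $\varphi$ to $F_{s, x_0}$ yields $|s - 1| = O(\delta^{1/2})$.

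With $|z - 1|$ and $|s - 1|$ both $O(\delta^{1/2})$, the triangle inequality
\begin{equation*}
\|\varphi - F_{1, x_0}\|_{2^*} \leq \|\varphi - zF_{s, x_0}\|_{2^*} + \|zF_{s, x_0} - F_{1, x_0}\|_{2^*}
\end{equation*}
closes the argument once one verifies the elementary Lipschitz continuity of $(z, s) \mapsto zF_{s, x_0}$ in $L^{2^*}$ near $(1, 1)$. The main obstacle is the quantitative slope-matching for $s$: while the heuristic comparison of the $\rho^2$-slopes of $\varphi^{-1/\gamma}$ and $F_{s, x_0}^{-1/\gamma}$ is transparent, converting it into an $L^{2^*}$ lower bound uniform in the arbitrary nonnegative profile $G$ and valid in continuous dimension $m$ requires a careful shell computation that cannot appeal to any Sobolev embedding or compactness in an ambient Euclidean space of dimension $m+n$.
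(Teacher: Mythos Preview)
Your overall architecture---reduce $z$ to $1$, then $s$ to $1$, then close with a Lipschitz bound on $(z,s)\mapsto zF_{s,x_0}$---matches the paper exactly, and your treatment of $z$ is in fact more careful than the paper's (which tacitly assumes $z$ real). The gap is in the $s$ step.

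Your slope heuristic is the right idea: $\varphi^{-1/\gamma}$ has $\rho^2$-coefficient $1$ while $F_{s,x_0}^{-1/\gamma}$ has $\rho^2$-coefficient $s$, and the closeness $\|\varphi - F_{s,x_0}\|_{2^*} \le 2\delta^{1/2}$ should force these slopes to agree. But your proposed implementation---bounding $|F_{s,x_0} - F_{1,x_0}|$ from below on a shell---does not connect to the hypothesis. You only know $\varphi$ is close to $F_{s,x_0}$; you have no a priori information about how $\varphi$ (whose profile $G$ is arbitrary) compares to $F_{1,x_0}$, so a lower bound on $|F_{s,x_0} - F_{1,x_0}|$ cannot be balanced against anything available. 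The constants $(4^\gamma - 3^\gamma)/12^\gamma$ and $\omega_m\omega_n/(mn)$ in (\ref{DeltaConstraint}) do not arise from a shell swing; they come from a different mechanism.

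The paper extracts the slope as follows. After rescaling so that the target becomes $F_{1,sx_0}$, set $\eta = sG(x/s) + \rho^2/s$ and $\xi = 1 + |x-sx_0|^2 + \rho^2$. Chebyshev's inequality applied to $\|\eta^{-\gamma} - \xi^{-\gamma}\|_{2^*} \le 2\delta^{1/2}$ on the box $\{\rho \le 1,\ |x - sx_0| \le 1\}$ (weighted measure $\omega_m\omega_n/(mn)$) produces a subset $B$ of more than half the box on which $|\eta^{-\gamma} - \xi^{-\gamma}|$ is pointwise small. Since $\xi \le 3$ there, the pointwise bound forces $\eta^{-\gamma} \ge 3^{-\gamma} - \text{(small)} \ge 4^{-\gamma}$, i.e.\ $\eta \le 4$; this is where $(4^\gamma - 3^\gamma)/12^\gamma$ enters. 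With $\eta,\xi$ two-sidedly bounded on $B$, the Mean Value Theorem converts the estimate to $|\eta - \xi| = O(\delta^{1/2})$ on $B$. Now comes the device your proposal is missing: by a Fubini argument, $B$ is large enough that for some fixed $\bar{x}$ it contains two points $(\rho_1,\bar{x})$ and $(\rho_2,\bar{x})$ with $\rho_1 \le 1/4$ and $\rho_2 \ge 3/4$. Subtracting $\eta - \xi$ at these two points, the $x$-dependent pieces $sG(\bar{x}/s)$ and $1 + |\bar{x} - sx_0|^2$ cancel, leaving $(\rho_2^2 - \rho_1^2)\,|1 - 1/s| = O(\delta^{1/2})$, hence $|s - 1| = O(\delta^{1/2})$. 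This two-point subtraction at a common $\bar x$, not a shell integral, is what converts $L^{2^*}$-closeness into slope control while remaining completely agnostic about $G$.
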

\noindent As can be seen from the proof, a possible choice for $C_4$ is $ 2 + 12^\gamma \cdot 2^{3+\gamma+1/2^*} \left( \frac{mn}{\omega_m \omega_n} \right)^{1/2^*} \| F \|_{2^*}$.

\begin{proof}
The proof to this lemma should follow in two steps.

Let $\varphi$ and $z F_{s,x_0}$ be such that $\| \varphi - z F_{s,x_0} \|_{2^*} < \delta^{1/2}$.

\textit{Step 1: we show that we can assume that $z = 1$.}  First of all, observe that $z \geq 0$, or else

\begin{eqnarray}
 \delta^{2^*/2} &\geq& \int_{\mathbb{R}^n} \int_{\mathbb{R}_+} | \varphi - z F_{s,x_0}|^{2^*} \mathrm{d} \Omega (\rho) \mathrm{d}x \nonumber \\
&\geq& \int_{\mathbb{R}^n} \int_{\mathbb{R}_+} |\varphi|^{2^*} + |z F_{s,x_0}|^{2^*} \mathrm{d}\Omega(\rho) \mathrm{d}x \nonumber \\
&\geq& \| \varphi \|_{2^*}^{2^*} = \| F \|_{2^*}^{2^*} \,, \nonumber
\end{eqnarray}
contradicting (\ref{DeltaConstraint}).

Now, for any $z,s > 0$, $\| z F_{s,x_0} \|_{2^*} = z \| F \|_{2^*} = z \| \varphi \|_{2^*}$.  Thus,
\begin{eqnarray}
 |z-1| \| F \|_{2^*} &=& \left| \| z F_{s,x_0} \|_{2^*} - \| \varphi \|_{2^*} \right| \nonumber \\
&\leq& \| \varphi - z F_{s,x_0} \|_{2^*} \nonumber \\
&<& \delta^{1/2} \,. \nonumber
\end{eqnarray}
Employing the triangle identity
\begin{align}\label{Est1}
 \| \varphi - F_{s,x_0}\|_{2^*} \leq& \| \varphi - z F_{s,x_0} \|_{2^*} + \| z F_{s,x_0} - F_{s,x_0} \|_{2^*} \nonumber \\
=& \| \varphi - z F_{s,x_0} \|_{2^*} + |z-1| \| F \|_{2^*} \nonumber \\
<& 2 \delta^{1/2}
\end{align}
Thus, up to enlarging the constant, we may replace $z$ by 1.

\textit{Step 2: we can assume that $s = 1$.}

\noindent Note that by a change of scale, we can rewrite (\ref{Est1}) as
\begin{equation}\label{Est2}
\| [s G(x/s) + \rho^2/s]^{-\gamma} - [1 + |x - s x_0|^2 + \rho^2]^{-\gamma} \|_{2^*} \leq 2 \delta^{1/2}
\end{equation}
Let $A := \left\{ (\rho, x) \in [0, \infty) \times \mathbb{R}^n \big| \rho, |x-x_0| \leq 1 \right\}$.  By a simple Fubini argument, we know that any set $B \subseteq A$ with measure greater than
\[
\int_{|x-x_0| \leq 1} \int_{1/4}^1 \omega_m \rho^{m-1} \mathrm{d} \rho \mathrm{d}x = \frac{\omega_m \omega_n (4^{m} - 1)}{mn 4^m}
\]
there exists $\bar{x} \in \left\{ x \in \mathbb{R}^n \big| |x - s x_0| \leq 1 \right\}$ such that $B \cap \left( [0, \infty) \times \{ \bar{x} \} \right)$ must intersect both
\begin{equation}\label{Intersection}
A \cap \{ (\rho, \bar{x}) \in [0, \infty) \times \mathbb{R}^n | \rho \leq 1/4 \} \text{ and } A \cap \{ (\rho, \bar{x}) \in [0, \infty) \times \mathbb{R}^n | \rho \geq 3/4 \}
\end{equation}
(If this were not the case, the Fubini Theorem would imply that the measure of $B$ would be smaller than $\omega_m \omega_n (4^{m} - 1)/ mn 4^{m}$.)

Now, applying Chebyshev's Inequality, by (\ref{Est2}), we get the existence of a set $B \subseteq A$ of measure at least $\omega_m \omega_n (2 \cdot 4^{m} - 1)/ mn 2 \cdot 4^m$ such that
\begin{equation}\label{Est3}
\left| [ sG(x/s) + \rho^2/s]^{-\gamma} - [1 + |x - sx_0|^2 + \rho^2]^{-\gamma} \right| \leq \delta^{1/2} 2^{1 + 1/2^*} \left( \frac{mn}{\omega_m \omega_n} \right)^{1/2^*} \,, \text{ } \forall (\rho,x) \in B \,.
\end{equation}
Set $\xi (\rho, x) := 1 + |x - s x_0|^2 + \rho^2$ and $\eta (\rho, x) := s G(x/s) + \rho^2/s$, so that (\ref{Est3}) becomes 
\begin{equation}\label{Est4}
\left| \eta^{-\gamma} - \xi^{-\gamma} \right| \leq \delta^{1/2} 2^{1 + 1/2^*} \left( \frac{mn}{\omega_m \omega_n} \right)^{1/2^*} \text{, inside } B
\end{equation}
We observe that $\xi \leq 3$ in $B$.  Moreover,
\begin{eqnarray}
 \eta^{-\gamma} &\geq& \xi^{-\gamma} - \left| \eta^{-\gamma} - \xi^{-\gamma} \right| \nonumber \\
&\geq& 3^{-\gamma} - \delta^{1/2} 2^{1+1/2^*} \left( \frac{mn}{\omega_m \omega_n} \right)^{1/2^*} \nonumber \\
&\geq& 4^{-\gamma} \text{ , by (\ref{DeltaConstraint}),} \nonumber
\end{eqnarray}
that is
\[
\eta \leq 4 \text{, inside $B$.}
\]
Combining this with (\ref{Est4}) we conclude that
\begin{equation}\label{Est5}
| \xi^\gamma - \eta^\gamma | \leq 12^\gamma \cdot 2^{1+1/2^*} \left( \frac{mn}{\omega_m \omega_n} \right)^{1/2^*} \delta^{1/2} \text{, inside $B$.}
\end{equation}
Note that $\xi = 1 + |x - s x_0|^2 + \rho^2 \geq 1$.  Thus, by (\ref{Est5}),
\[
1 - \eta^\gamma \leq 12^\gamma \cdot 2^{1+1/2^*} \left( \frac{mn}{\omega_m \omega_n} \right)^{1/2^*} \delta^{1/2} \text{, inside $B$.}
\]
Rearranging terms in the above and using the assumption that $\delta^{1/2} \leq 12^{-\gamma} \cdot 2^{-1 - 1/2^*} \left( \frac{\omega_m \omega_n}{mn} \right)^{1/2^*}$, we conclude that
\[
\eta \geq 1 / 2 \text{, inside $B$.} \nonumber
\]
Applying the Mean Value Theorem
\begin{align}
| \xi^\gamma - \eta^\gamma | &= \gamma | \lambda |^{\gamma-1} | \eta- \xi | \text{, for some $\lambda$ between $\eta$ and $\xi$} \nonumber \\
&\geq \gamma 2^{-\gamma+1} | \eta - \xi | \text{, inside $B$.} \nonumber
\end{align}
Combining this with (\ref{Est4}), we obtain
\[
| \eta - \xi | \leq \frac{12^\gamma \cdot 2^{\gamma+1/2^*}}{\gamma} \left( \frac{mn}{\omega_m \omega_n} \right)^{1/2^*} \delta^{1/2} \,, \text{ inside $B$.}
\]
Or equivalently,
\begin{equation}\label{Est6}
\left| 1 + |x-sx_0|^2 + \rho^2 - s G(x/s) - \rho^2/s \right| \leq \frac{12^\gamma \cdot 2^{\gamma+1/2^*}}{\gamma} \left( \frac{mn}{\omega_m \omega_n} \right)^{1/2^*} \delta^{1/2} \,, \text{ } \forall (x,\rho) \in B \,.
\end{equation}
By the observation in the line before (\ref{Intersection}) and the line (\ref{Intersection}), we know that we have chosen $B$ large enough that there exists $\bar{x} \in \mathbb{R}^n$ with 
\begin{equation}\label{inB}
(\rho_1, \bar{x}), (\rho_2, \bar{x}) \in B \,,
\end{equation}
for $\rho_1 \in [0, \frac{1}{4}]$ and $\rho_2 \in [\frac{3}{4}, 1]$.  Then the above estimate, (\ref{Est6}), gives
\begin{align}\label{Est7}
\frac{1}{2} \left| 1 - \frac{1}{s} \right| &\leq (\rho_2^2 - \rho_1^2) \left| 1 - \frac{1}{s} \right| \nonumber \\
&\leq \left| 1 + |\bar{x} - s x_0|^2 + \rho_2^2 - s G(\bar{x}/s) - \rho_2^2 / s \right| + \left| 1 + |\bar{x} - s x_0|^2 + \rho_1^2 - s G(\bar{x}/s) - \rho_1^2 / s \right| \nonumber \\
&\leq \frac{12^\gamma \cdot 2^{1+\gamma+1/2^*}}{\gamma} \left( \frac{mn}{\omega_m \omega_n} \right)^{1/2^*} \delta^{1/2} \text{, by (\ref{Est6}) and (\ref{inB}).}
\end{align}
Using (\ref{DeltaConstraint}) and the identity $(s - 1)(1 - (1 - 1/s)) = (1 - 1/s)$, we deduce
\begin{equation}\label{Est8}
| s - 1 | \leq  \frac{12^\gamma \cdot 2^{2+\gamma+1/2^*}}{\gamma} \left( \frac{mn}{\omega_m \omega_n} \right)^{1/2^*} \delta^{1/2} \,.
\end{equation}
Note that the above and (\ref{DeltaConstraint}) imply that
\begin{equation}\label{Est9}
|s-1| \leq 1/2 \,.
\end{equation}
Next we observe that
\begin{align}
 | \partial_s F_{s, x_0} (x, r) | &= \left| \frac{\gamma}{s} \cdot \frac{1 - s^2 \rho^2 - s^2 |x - x_0|^2}{1 + s^2 \rho^2 + s^2 |x-x_0|^2} F_{s,x_0}(\rho,x) \right| \nonumber \\
&\leq \frac{\gamma}{s} |F_{s,x_0}(\rho,x)| \nonumber \\
&\leq 2 \gamma|F_{s,x_0}(\rho,r)| \text{, } \forall s \in \left[ 1 / 2, 3 / 2 \right] \,. \nonumber
\end{align}
Combining the above with Minkowski's Integral Inequality, we deduce the following:
\begin{equation}\label{Est10}
 \| F_{s,x_0} - F_{1,x_0} \|_{2^*} \leq 2 \gamma | s - 1 | \| F \|_{2^*} \text{, } \forall s \in \left[ 1 / 2, 3 / 2 \right] \,.
\end{equation}
Combining (\ref{Est10}) with (\ref{Est1}), (\ref{Est8}), and (\ref{Est9}) we finally obtain
\begin{equation}
 \| \varphi - F_{1,x_0} \|_{2^*} \leq \left[ 2 + 12^\gamma \cdot 2^{3+\gamma+1/2^*} \left( \frac{mn}{\omega_m \omega_n} \right)^{1/2^*} \| F \|_{2^*} \right] \delta^{1/2} \,.
\end{equation}
\end{proof}

\section{Bounding $\|u^{2t}-v^{2t}\|_1$}

\begin{lm}\label{BoundingLm}
 Let $u \in \dot{H}^1 (\mathbb{R}^n)$ be a nonnegative function satisfying (\ref{Normalization}), and let $\varphi_u$ be defined as in Proposition \ref{Deficits}.  Suppose that $\| \varphi_u - F_{1,x_0} \|_{2^*} \leq 1$.  Then
\begin{equation}\label{Thing}
\| u^{2t} - v^{2t} (\cdot - x_0) \|_1 \leq C_5 \| \varphi_u - F_{1,x_0} \|_{2^*} \,,
\end{equation}
for some constant $C_5 := C_5 (m,n)$, depending upon the dimensions $m$ and $n$.
\end{lm}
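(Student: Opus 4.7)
The plan is to realize both $u^{2t}(x)$ and $v^{2t}(x-x_0)$ as $\rho$-marginals of $\varphi_u^{2^*}$ and $F_{1,x_0}^{2^*}$ respectively, and then convert the resulting $L^1$ comparison into an $L^{2^*}$ comparison via a pointwise mean-value bound followed by H\"older's inequality.

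The starting observation is the exponent identity $\gamma \cdot 2^* = m+n$, which gives $\varphi_u^{2^*}(\rho, x) = [w_u(x) + \rho^2]^{-(m+n)}$ and $F_{1,x_0}^{2^*}(\rho, x) = [1 + |x-x_0|^2 + \rho^2]^{-(m+n)}$. The rescaling $\rho = w_u(x)^{1/2}\theta$ collapses the $\rho$-integral of the first to $I_1\, w_u(x)^{-(m+2n)/2} = I_1\, u(x)^{2t}$, where $I_1 := \omega_m \int_0^\infty (1+\theta^2)^{-(m+n)} \theta^{m-1}\,\mathrm{d}\theta = C_1^{2^*/2}$; the analogous rescaling $\rho = (1+|x-x_0|^2)^{1/2}\theta$ yields $\int_{\mathbb{R}_+} F_{1,x_0}^{2^*}(\rho, x)\,\mathrm{d}\Omega(\rho) = I_1\, v^{2t}(x-x_0)$. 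Subtracting, taking absolute values, and integrating in $x$ gives
\[
I_1 \| u^{2t} - v^{2t}(\cdot - x_0) \|_1 \leq \int_{\mathbb{R}^n \times \mathbb{R}_+} \bigl| \varphi_u^{2^*} - F_{1,x_0}^{2^*} \bigr|\,\mathrm{d}\Omega(\rho)\,\mathrm{d}x.
\]

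Next I would apply the pointwise bound $|a^{2^*} - b^{2^*}| \leq 2^*(a^{2^*-1} + b^{2^*-1})|a-b|$, valid for $a, b \geq 0$ by the mean value theorem, followed by H\"older's inequality in the product measure space $(\mathbb{R}^n \times \mathbb{R}_+, \mathrm{d}\Omega(\rho)\,\mathrm{d}x)$ with conjugate exponents $2^*/(2^*-1)$ and $2^*$. Using $\|a^{2^*-1}\|_{2^*/(2^*-1)} = \|a\|_{2^*}^{2^*-1}$, this produces
\[
\int \bigl|\varphi_u^{2^*} - F_{1,x_0}^{2^*}\bigr|\,\mathrm{d}\Omega(\rho)\,\mathrm{d}x \leq 2^*\bigl( \| \varphi_u \|_{2^*}^{2^*-1} + \| F_{1,x_0} \|_{2^*}^{2^*-1} \bigr)\, \| \varphi_u - F_{1,x_0} \|_{2^*}.
\]
The hypothesis $\| \varphi_u - F_{1,x_0} \|_{2^*} \leq 1$ together with the triangle inequality and translation invariance yields $\| \varphi_u \|_{2^*} \leq \| F \|_{2^*} + 1$ and $\| F_{1,x_0} \|_{2^*} = \| F \|_{2^*}$, so the prefactor is bounded by a constant depending only on $m$ and $n$. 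Dividing by $I_1$ concludes the proof with
\[
C_5 := \frac{2^*}{I_1}\bigl[ (\| F \|_{2^*}+1)^{2^*-1} + \| F \|_{2^*}^{2^*-1} \bigr].
\]

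I do not foresee any serious obstacle; the argument is essentially a direct calculation. The only nontrivial point is the exponent bookkeeping in the first step: the identity $\gamma \cdot 2^* = m+n$ is exactly what makes the $\rho$-integral of $\varphi_u^{2^*}$ reduce under a single rescaling to the pure power $w_u^{-(m+2n)/2} = u^{2t}$ appearing in $\delta_{GN}[u]$. A mild subtlety is that the normalization (\ref{Normalization}) alone does not force $\| \varphi_u \|_{2^*} = \| F \|_{2^*}$, so the factor $\| \varphi_u \|_{2^*}^{2^*-1}$ must be controlled via the triangle inequality from the hypothesis $\| \varphi_u - F_{1,x_0} \|_{2^*} \leq 1$ rather than from an exact equality of norms.
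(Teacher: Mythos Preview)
Your argument is correct and considerably more economical than the paper's. The paper works directly with the expanded integral
\[
\|\varphi_u-F_{1,x_0}\|_{2^*}^{2^*}=\int_{\mathbb{R}^n}\int_{\mathbb{R}_+}\frac{|(w_u+\rho^2)^\gamma-(H+\rho^2)^\gamma|^{2^*}}{(w_u+\rho^2)^{m+n}(H+\rho^2)^{m+n}}\,\mathrm{d}\Omega(\rho)\,\mathrm{d}x,\qquad H(x)=1+|x-x_0|^2,
\]
splits $\{w_u<H\}$ into $A_1=\{H/2\le w_u<H\}$ and $A_2=\{w_u<H/2\}$, applies the Mean Value Theorem separately on each piece, and on $A_2$ needs a further case distinction $m+n\le 4$ versus $m+n\ge 4$ to handle the sign of $m+n-2^*$ in the resulting integrand. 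The $A_1$ estimate ends with a H\"older step much like yours; the $A_2$ estimate actually produces the stronger bound $\int_{A_2}|u^{2t}-v^{2t}|\le C\|\varphi_u-F_{1,x_0}\|_{2^*}^{2^*}$, which is then degraded to a first power using the hypothesis $\|\varphi_u-F_{1,x_0}\|_{2^*}\le 1$.

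Your route bypasses all of this by first recognizing the exact marginal identity $\int_{\mathbb{R}_+}\varphi_u^{2^*}\,\mathrm{d}\Omega(\rho)=I_1\,u^{2t}$ (and its analogue for $F_{1,x_0}$), and only then applying a single global pointwise bound $|a^{2^*}-b^{2^*}|\le 2^*(a^{2^*-1}+b^{2^*-1})|a-b|$ plus one H\"older step. This avoids the region decomposition and the dimensional case split entirely, and it gives an explicit constant of the same quality. What the paper's approach buys is a sharper estimate on the ``far'' set $A_2$ (power $2^*$ rather than $1$), but since that gain is immediately discarded via $\|\cdot\|_{2^*}\le 1$, nothing is lost in your version. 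Note also that neither argument actually uses the normalization~(\ref{Normalization}); you were right to control $\|\varphi_u\|_{2^*}$ through the triangle inequality rather than an assumed equality of norms.
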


As can be seen from the proof, a possible choice for $C_5$ is
\[
C_5 (m,n) =
\begin{cases}
$$
4 \max \left\{ \frac{2^\frac{m+2n}{2} (m+2n)}{m+n-2} \| v \|_{2t}^{2t(1-1/2^*)} \left( \int_{\mathbb{R}_+} \frac{\omega_m \theta^{m-1}}{(1+\theta^2)^{2^* (m+n)/2}} \mathrm{d}\theta \right)^{-1/2^*} \,, \frac{2^{3 \cdot 2^*}}{(m+n-2)^{2^*}} \left( \int_0^{\sqrt{2}} \frac{\omega_m \tilde{\theta}^{m-1}}{(1+\tilde{\theta}^2)^{m+n}} \mathrm{d} \tilde{\theta} \right)^{-1} \right\} \\ 
\text{for $2 < m+n \leq 4$} \\
4 \max \left\{ \frac{2^\frac{m+2n}{2}(m+2n)}{m+n-2} \| v \|_{2t}^{2t (1-1/2^*)} \left( \int_{\mathbb{R}_+} \frac{\omega_m \theta^{m-1}}{(1+\theta^2)^{2^* (m+n)/2}} \mathrm{d}\theta \right)^{-1/2^*} \,, \frac{2^{2^* (m+n-1)}}{(m+n-2)^{2^*}} \left( \int_0^{\sqrt{2}} \frac{\omega_m \tilde{\theta}^{m-1}}{(1+\tilde{\theta^2})^{m+n}} \mathrm{d}\tilde{\theta} \right)^{-1} \right\} \\
\text{for $m+n \geq 4$}
$$
\end{cases}
\]
We also remark that by considering $u$ of the form $v + \varepsilon \phi$ with $\varepsilon > 0$ small, one sees that the unit in the above estimate is optimal.

\begin{proof}
We will provide a sketch of the proof. The results of some precise calculations will be used here, while the actual steps of the calculations will be presented in the appendix. The steps of these calculations are tedious and are generalizations of the same steps for the proof of Carlen and Figalli's analogue of Lemma \ref{BoundingLm}, Lemma 2.4 in \cite{CaFi}. Before proceeding, we introduce the following notation:
\[
 H(x) := 1 + |x-x_0|^2 \,.
\]
We begin by observing that
\begin{align}\label{Est11}
 \| \varphi_u - F_{1,x_0} \|_{2^*}^{2^*} =& \int_{\mathbb{R}^n} \left( \int_{\mathbb{R}_+} \frac{ \left| (w_u + \rho^2)^\gamma - (H + \rho^2)^\gamma \right|^{2^*}}{(w_u +\rho^2)^{m+n} (H+\rho^2)^{m+n}} \omega_m \rho^{m-1} \mathrm{d} \rho \right) \mathrm{d}x \nonumber \\
=& \int_{\{ w_u < H \}} \left( \int_{\mathbb{R}_+} \frac{ \left| (w_u + \rho^2)^\gamma - (H + \rho^2)^\gamma \right|^{2^*}}{(w_u +\rho^2)^{m+n} (H+\rho^2)^{m+n}} \omega_m \rho^{m-1} \mathrm{d} \rho \right) \mathrm{d}x \nonumber \\
&+ \int_{\{ w_u > H \}} \left( \int_{\mathbb{R}_+} \frac{ \left| (w_u + \rho^2)^\gamma - (H + \rho^2)^\gamma \right|^{2^*}}{(w_u +\rho^2)^{m+n} (H+\rho^2)^{m+n}} \omega_m \rho^{m-1} \mathrm{d} \rho \right) \mathrm{d}x
\end{align}
First, we will estimate the first integral term in the right hand side of the above. Then, we will use symmetry to obtain a similar estimate for the second integral in the right hand side of the above.  We split $\{w_u < H \}$ as follows, $\{w_u < H \} = \{ H/2 \leq w_u < H \} \cup \{ w_u < H/2 \} =: A_1 \cup A_2$.

On $A_1$, we use the Mean Value Theorem and a change of variables to compute that
\begin{equation}\label{Est12}
\int_{A_1} \left( \int_{\mathbb{R}_+} \frac{|(w_u + \rho^2)^\gamma - (H+\rho^2)^\gamma|^{2^*}}{(w_u + \rho^2)^{m+n} (H + \rho^2)^{m+n}} \mathrm{d} \Omega(\rho) \right) \mathrm{d}x \geq \gamma^{2^*} \int_{\mathbb{R}_+} (1+\theta^2)^{-(2^* + m + n)} \mathrm{d}\Omega (\theta) \int_{A_1} \frac{|H-w_u|^{2^*}}{H^{2^* + \frac{m+2n}{2}}} \mathrm{d}x \,.
\end{equation}
Also, pointwise on $A_1$, we use the Mean Value Theorem to deduce that
\begin{equation}\label{Est13}
 |u^{2t} - v^{2t}| = \left| w_u^{-\frac{m+2n}{2}} - H^{-\frac{m+2n}{2}} \right| \leq 2^\frac{m+2n}{2} (m+2n) \frac{|H-w_u|}{H^{\frac{m+2n}{2}+1}} \,.
\end{equation}
Note that
\begin{equation}\label{Eq1}
\frac{m+2n}{2} = \frac{m + 2n}{2 \cdot 2^*} + \frac{(m+2n)(2^*-1)}{2 \cdot 2^*} \,.
\end{equation}
Combining (\ref{Est13}), (\ref{Eq1}), and using Holder's Inequality we calculate that
\begin{align}\label{Est14}
 &\int_{A_1} |u^{2t}-v^{2t}| \mathrm{d}x \nonumber \\
\leq& \frac{2^\frac{m+2n}{2} (m+2n)}{\gamma} \left( \| v \|_{2t}^{2t} \right)^{1-1/2^*} \left( \int_{\mathbb{R}_+} (1+\theta^2)^{-(2^* + m + n)} \mathrm{d} \Omega (\theta) \right)^{-1/2^*} \cdot \nonumber \\
 & \left[ \int_{A_1} \left( \int_{\mathbb{R}_+} \frac{|(w_u + \rho^2)^\gamma - (H +\rho^2)^\gamma|^{2^*}}{(w_u + \rho^2)^{m+n} (H + \rho^2)^{m+n}} \mathrm{d} \Omega (\theta) \right) \mathrm{d}x \right]^{1/2^*} \,,
\end{align}
where the last inequality is deduced from (\ref{Est12}).

On $A_2$, we use the Mean Value Theorem to compute that
\begin{align}\label{Est15}
& \int_{A_2} \left( \int_{\mathbb{R}_+}  \frac{|(H+\rho^2)^\gamma - (w_u+\rho^2)^\gamma|^{2^*}}{|w_u+\rho^2|^{m+n} |H+\rho^2|^{m+n}} \mathrm{d} \Omega (\rho) \right) \mathrm{d}x \nonumber \\
\geq& \left( \frac{\gamma}{2} \right)^{2^*} \int_{A_2} H^{2^*} \left( \int_{\mathbb{R}_+} \frac{(K+\rho^2)^{m+n-{2^*}} }{|w_u+\rho^2|^{m+n} |H + \rho^2|^{m+n}} \mathrm{d} \Omega (\rho) \right) \mathrm{d}x \text{, for some $H / 2 \leq K \leq H$.}
\end{align}
Before proceeding, it is worth noting that in the work of E. A. Carlen and A. Figalli, \cite{CaFi}, they prove an analogue of Lemma \ref{BoundingLm} for which the proof is a bit more straightforward. The main reason for this is that in their setting the dimensions of the relevant functions lead to favorable cancellations. They prove an analogue of Lemma \ref{BoundingLm} that is almost the same as proving Lemma \ref{BoundingLm} with $m=n=2$. Carrying out some of the steps of proving Lemma \ref{BoundingLm} for $m=n=2$ illustrates the aspects that make their proof more straightforward. If we set $m=n=2$, the numerator of first line of (\ref{Est15}) simplifies to $|H-w_u|^4$.  This makes calculating a good lower bound considerably more simple.  Indeed, if one tries to prove Lemma \ref{BoundingLm} for $m=n=2$ by deriving (\ref{Est15}), we have that $(K+\rho^2)^{m+n-{2^*}} = (K+\rho^2)^{2+2-4} = 1$ and we need not worry about bounding the numerator in the inner integrand of (\ref{Est15}).

We will treat (\ref{Est15}) in two cases.  

\textbf{Case 1: $2 < m+n \leq 4$.}  In this case, $m+n-2^* \leq 0$. Thus,
\begin{align}\label{Est16}
& \int_{A_2} H^{2^*} \left( \int_{\mathbb{R}_+} \frac{(K+\rho^2)^{m+n-{2^*}}}{|w_u+\rho^2|^{m+n} |H + \rho^2|^{m+n}} \mathrm{d} \Omega (\rho) \right) \mathrm{d}x \nonumber \\
\geq& \int_{A_2} H^{2^*} \left( \int_{\mathbb{R}_+} \frac{(H+\rho^2)^{m+n-2^*}}{|w_u+\rho^2|^{m+n} |H + \rho^2|^{m+n}} \mathrm{d} \Omega (\rho) \right) \mathrm{d}x\text{, changing variables twice and changing bounds} \nonumber \\
\geq& 2^{-2^*} \int_0^{\sqrt{2}} (1+\tilde{\theta}^2)^{-(m+n)} \mathrm{d} \Omega (\tilde{\theta})  \int_{A_2} w_u^{-\frac{m+2n}{2}} - H^{-\frac{m+2n}{2}} \mathrm{d}x \nonumber \\
\geq& 2^{-2^*} \int_0^{\sqrt{2}} (1+\tilde{\theta}^2)^{-(m+n)} \mathrm{d} \Omega (\tilde{\theta})  \int_{A_2} |u^{2t} - v^{2t}| \mathrm{d}x \,.
\end{align} 

\textbf{Case 2: $m+n \geq 4$.} In this case, $m+n-2^* \geq 0$.  Thus,
\begin{align}\label{Est17}
 & \int_{A_2} H^{2^*} \left( \int_{\mathbb{R}_+} \frac{(K+\rho^2)^{m+n-{2^*}}}{|w_u+\rho^2|^{m+n} |H + \rho^2|^{m+n}} \mathrm{d} \Omega (\rho) \right) \mathrm{d}x \nonumber \\
 \geq& \int_{A_2} H^{2^*} \left( \int_{\mathbb{R}_+} \frac{(\frac{H}{2}+\rho^2)^{m+n-2^*}}{|w_u+\rho^2|^{m+n} |H + \rho^2|^{m+n}} \mathrm{d} \Omega (\rho) \right) \mathrm{d}x \text{, changing variables twice and changing bounds} \nonumber \\
 \geq& 2^{-2(m+n) + 2^*} \int_0^{\sqrt{2}} (1+\tilde{\theta^2})^{-(m+n)} \mathrm{d} \Omega (\tilde{\theta}) \int_{A_2} w_u^{-\frac{m+2n}{2}} \mathrm{d}x \nonumber \\
 \geq& 2^{-2(m+n) + 2^*} \int_0^{\sqrt{2}} (1+\tilde{\theta^2})^{-(m+n)} \mathrm{d}\Omega (\tilde{\theta}) \int_{A_2} |u^{2t} - v^{2t}| \mathrm{d}x \,.
\end{align}

Combining (\ref{Est15}) with (\ref{Est16}) and (\ref{Est17}), we get
\begin{align}
& \int_{A_2} \left( \int_{\mathbb{R}_+}  \frac{|(H+\rho^2)^\gamma - (w_u+\rho^2)^\gamma|^{2^*}}{|w_u+\rho^2|^{m+n} |H+\rho^2|^{m+n}} \mathrm{d} \Omega (\rho) \right) \mathrm{d}x \nonumber \\
\geq \nonumber
&\begin{cases}
$$
\left( \frac{\gamma}{4} \right)^{2^*} \int_0^{\sqrt{2}} (1+\tilde{\theta}^2)^{-(m+n)} \mathrm{d} \Omega (\tilde{\theta})  \int_{A_2} |u^{2t} - v^{2t}| \mathrm{d}x \text{, for $2<m+n\leq4$} \nonumber \\
\frac{\gamma^{2^*}}{2^{2(m+n)}} \int_0^{\sqrt{2}} (1+\tilde{\theta^2})^{-(m+n)} \mathrm{d} \Omega (\tilde{\theta}) \int_{A_2} |u^{2t} - v^{2t}| \mathrm{d}x \text{, for $m+n \geq 4$.}
$$
\end{cases}
\end{align}
Combining this with (\ref{Est11}),
\begin{equation}\label{Est18}
\frac{C_5}{2} \| \varphi_u - F_{1,x_0} \|_{2^*}^{2^*} \geq \frac{C_5}{2} \int_{A_2} \left( \int_{\mathbb{R}_+}  \frac{|(H+\rho^2)^\gamma - (w_u+\rho^2)^\gamma|^{2^*}}{|w_u+\rho^2|^{m+n} |H+\rho^2|^{m+n}} \mathrm{d} \Omega (\rho) \right) \mathrm{d}x \geq \int_{\{u>v\}} |u^{2t} - v^{2t}| \mathrm{d}x \,.
\end{equation}
Using the same arguments as above but switching the roles of $w_u$ and $H$, we deduce the analogue of (\ref{Est18}) for the integral on the right hand side on $\{ u < v \}$ instead of $\{ u > v \}$. Combining this with (\ref{Est18}) and the assumption that $\| \varphi_u - F_{1,x_0} \|_{2^*} \leq 1$, we conclude (\ref{Thing}).
\end{proof}

\section{Proof of Theorem 1.1}

In the following, it will be useful to recall that
\[
u_\lambda (y) = \lambda^{n/2t} u (\lambda y) \,,
\]
for $\lambda > 0$. First, suppose that $u \in \dot{H}^1 (\mathbb{R}^n)$ is a nonnegative function satisfying (\ref{Normalization}).  Collecting together (\ref{DeficitEq}) and Lemmas \ref{ControlInfLm} and \ref{BoundingLm}, we deduce that there exist constants $K_1 (n,t)$, $\delta_1(n,t) > 0$ depending on $n$ and $t$ such that whenever $\delta_{GN} [u] \leq \delta_1$,
\[
\| u^{2t} - v^{2t} (\cdot - x_0) \|_1 \leq K_1 \delta_{GN} [u]^{1/2}
\]

Next, $\delta_{GN} [u]$ and $\| u \|_{2t}$ are both unchanged if $u$ is replaced by $u_\lambda$.  Thus, assuming only that $\| u \|_{2t} = \| v \|_{2t}$, we may choose a scale parameter $\lambda$ so that $u_\lambda$ satisfies (\ref{Normalization}).  In this case,
\[
\int_{\mathbb{R}^n} | \lambda^n u^{2t} (\lambda y) - v^{2t} (y) | \mathrm{d}y = \int_{\mathbb{R}^n} | u_\lambda^{2t} (y) - v^{2t} (y) | \mathrm{d}y \leq K_1 \delta_{GN} [u]^{1/2} \,.
\]
Changing variables once more,  and taking $\kappa = 1/\lambda$, we obtain
\[
\int_{\mathbb{R}^n} | u^{2t} (y) - \kappa^n v^{2t} (\kappa y) | \mathrm{d}y \leq K_1 \delta_{GN} [u]^{1/2} \,,
\]
i.e.
\[
\int_{\mathbb{R}^n} | u^{2t} (y) - v_\kappa^{2t} (y) | \mathrm{d}y \leq K_1 \delta_{GN} [u]^{1/2} \,.
\]
This conclude the proof of Theorem \ref{GN StabEst}.

\section{Appendix}

In the following, we provide detailed calculations of the formulas (\ref{Est12})-(\ref{Est17}). On $A_1$, we compute
\begin{align}
&\int_{A_1} \left( \int_{\mathbb{R}_+} \frac{|(w_u + \rho^2)^\gamma - (H+\rho^2)^\gamma|^{2^*}}{(w_u + \rho^2)^{m+n} (H + \rho^2)^{m+n}} \mathrm{d} \Omega (\rho) \right) \mathrm{d}x \text{, which by the Mean Value Theorem} \nonumber \\
 &= \int_{A_1} \left( \int_{\mathbb{R}_+} \frac{| \gamma (G+\rho^2)^{\gamma-1} (H-w_u)|^{2^*}}{(w_u + \rho^2)^{m+n} (H + \rho^2)^{m+n}} \mathrm{d} \Omega (\rho) \right) \mathrm{d}x
 \text{, for some $w_u < G < H$} \nonumber \\
 &\geq \gamma^{2^*} \int_{A_1} \left( \int_{\mathbb{R}_+} \frac{|H-w_u|^{2^*}}{(H+\rho^2)^{2^* + m + n}} \mathrm{d} \Omega (\rho) \right) \mathrm{d}x \,, \text{ substituting $\rho = H^{1/2} \theta$} \nonumber \\
 &= \gamma^{2^*} \int_{\mathbb{R}_+} (1+\theta^2)^{-(2^* + m + n)} \mathrm{d} \Omega (\theta) \int_{A_1} \frac{|H-w_u|^{2^*}}{H^{2^* + \frac{m+2n}{2}}} \mathrm{d}x \,. \nonumber \tag{\ref{Est12}}
\end{align}
Also, pointwise on $A_1$,
\begin{align}
 |u^{2t} - v^{2t}| &= \left| \frac{H^\frac{m+2n}{2} - w_u^\frac{m+2n}{2}}{w_u^\frac{m+2n}{2} H^\frac{m+2n}{2}} \right| \text{, which by the Mean Value Theorem} \nonumber \\
 &= \frac{\left|\frac{m+2n}{2} \tilde{G}^{\frac{m+2n}{2}-1} (H-w_u) \right|}{w_u^\frac{m+2n}{2} H^\frac{m+2n}{2}} \text{, for some $w_u < \tilde{G} < H$} \nonumber \\
 &\leq 2^\frac{m+2n}{2} (m+2n) \frac{|H-w_u|}{H^{\frac{m+2n}{2}+1}} \,, \text{ because $\tilde{G} > w_u > H/2$.} \nonumber \tag{\ref{Est13}}
\end{align}
Note that
\begin{equation}\tag{\ref{Eq1}}
\frac{m+2n}{2} = \frac{m + 2n}{2 \cdot 2^*} + \frac{(m+2n)(2^*-1)}{2 \cdot 2^*} \,.
\end{equation}
Thus,
\begin{align}
 \int_{A_1} |u^{2t}-v^{2t}| \mathrm{d}x \leq& 2^\frac{m+2n}{2} (m+2n) \int_{A_1} \frac{|H-w_u|}{H^{\frac{m+2n}{2}+1}} \mathrm{d}x \,, \text{ by (\ref{Est13})} \nonumber \\
 \leq& 2^\frac{m+2n}{2} (m+2n) \left( \int_{\mathbb{R}^n} H^{-\frac{m+2n}{2}} \mathrm{d}x \right)^{1 - 1/2^*} \left( \int_{A_1} \frac{|H-w_u|^{2^*}}{H^{\frac{m+2n}{2}+2^*}} \mathrm{d}x \right)^{1/2^*} \nonumber \\
 &\text{by (\ref{Eq1}) and Holder's Inequality} \nonumber \\
 \leq& \frac{2^\frac{m+2n}{2} (m+2n)}{\gamma} \left( \| v \|_{2t}^{2t} \right)^{1-1/2^*} \left( \int_{\mathbb{R}_+} (1+\theta^2)^{-(2^* + m + n)} \mathrm{d} \Omega (\theta) \right)^{-1/2^*} \cdot \nonumber \\
 & \left[ \int_{A_1} \left( \int_{\mathbb{R}_+} \frac{|(w_u + \rho^2)^\gamma - (H +\rho^2)^\gamma|^{2^*}}{(w_u + \rho^2)^{m+n} (H + \rho^2)^{m+n}} \mathrm{d} \Omega (\rho) \right) \mathrm{d}x \right]^{1/2^*} \text{, by (\ref{Est12}).} \nonumber \tag{\ref{Est14}}
\end{align}

On $A_2$, we compute
\begin{align}
& \int_{A_2} \left( \int_{\mathbb{R}_+}  \frac{|(H+\rho^2)^\gamma - (w_u+\rho^2)^\gamma|^{2^*}}{|w_u+\rho^2|^{m+n} |H+\rho^2|^{m+n}} \mathrm{d} \Omega (\rho) \right) \mathrm{d}x \nonumber \\
\geq& \int_{A_2} \left( \int_{\mathbb{R}_+} \frac{|(H+\rho^2)^\gamma - (\frac{H}{2}+\rho^2)^\gamma|^{2^*}}{|w_u+\rho^2|^{m+n} |H + \rho^2|^{m+n}} \mathrm{d} \Omega (\rho) \right) \mathrm{d}x \text{, which by the Mean Value Theorem} \nonumber  \\
=& \int_{A_2} \left( \int_{\mathbb{R}_+} \frac{|\gamma (K+\rho^2)^{\gamma-1} \frac{H}{2}|^{2^*}}{|w_u+\rho^2|^{m+n} |H + \rho^2|^{m+n}} \mathrm{d} \Omega (\rho) \right) \mathrm{d}x \text{, for some $H / 2 < K < H$} \nonumber \\
=& \left( \frac{\gamma}{2} \right)^{2^*} \int_{A_2} H^{2^*} \left( \int_{\mathbb{R}_+} \frac{(K+\rho^2)^{m+n-{2^*}} }{|w_u+\rho^2|^{m+n} |H + \rho^2|^{m+n}} \mathrm{d} \Omega (\rho) \right) \mathrm{d}x \nonumber \,. \tag{\ref{Est15}}
\end{align}

We will treat (\ref{Est15}) in two cases.  

\textit{Case 1: $2 < m+n \leq 4$.}  In this case, $m+n-2^* \leq 0$.   Thus,
\begin{align}
& \int_{A_2} H^{2^*} \left( \int_{\mathbb{R}_+} \frac{(K+\rho^2)^{m+n-{2^*}}}{|w_u+\rho^2|^{m+n} |H + \rho^2|^{m+n}} \mathrm{d} \Omega (\rho) \right) \mathrm{d}x \nonumber \\
\geq& \int_{A_2} H^{2^*} \left( \int_{\mathbb{R}_+} \frac{(H+\rho^2)^{m+n-2^*}}{|w_u+\rho^2|^{m+n} |H + \rho^2|^{m+n}} \mathrm{d} \Omega (\rho) \right) \mathrm{d}x \,, \text{ because $m+n-2^* \leq 0$} \nonumber \\
=& \int_{A_2} H^{2^*} \left( \int_{\mathbb{R}_+} |w_u+\rho^2|^{-(m+n)} |H + \rho^2|^{-2^*} \mathrm{d} \Omega (\rho) \right) \mathrm{d}x \,, \text{ letting $\rho = H^{1/2} \theta$} \nonumber \\
=& \int_{A_2} H^{-\frac{m+2n}{2}} \left( \int_{\mathbb{R}_+} \left| \frac{w_u}{H}+\theta^2 \right|^{-(m+n)} (1+\theta^2)^{-2^*} \mathrm{d} \Omega (\theta) \right) \mathrm{d}x \nonumber \\
\geq& \int_{A_2} H^{-\frac{m+2n}{2}} \left( \int_0^1  \left| \frac{w_u}{H}+\theta^2 \right|^{-(m+n)} 2^{-2^*} \mathrm{d} \Omega (\theta) \right) \mathrm{d}x \,, \text{ as $1 + \theta^2 \leq 2$ for $\theta \in [0,1]$} \nonumber \\
& \text{letting $\theta = \left( w_u / H \right)^{1/2} \tilde{\theta}$ and using the fact that $\left( H / w_u \right)^{1/2} \geq \sqrt{2}$ on $A_2$} \nonumber \\
\geq& 2^{-2^*} \int_{A_2} w_u^{-\frac{m+2n}{2}} \left( \int_0^{\sqrt{2}} (1+\tilde{\theta}^2)^{-(m+n)} \mathrm{d} \Omega (\tilde{\theta}) \right) \mathrm{d}x \nonumber \\
\geq& 2^{-2^*} \int_0^{\sqrt{2}} (1+\tilde{\theta}^2)^{-(m+n)} \mathrm{d} \Omega (\tilde{\theta})  \int_{A_2} w_u^{-\frac{m+2n}{2}} - H^{-\frac{m+2n}{2}} \mathrm{d}x \nonumber \\
\geq& 2^{-2^*} \int_0^{\sqrt{2}} (1+\tilde{\theta}^2)^{-(m+n)} \mathrm{d} \Omega (\tilde{\theta}) \int_{A_2} |u^{2t} - v^{2t}| \mathrm{d}x \nonumber \,. \tag{\ref{Est16}}
\end{align} 

\textit{Case 2: $m+n \geq 4$.} In this case, $m+n-2^* \geq 0$.  Thus,
\begin{align}
 & \int_{A_2} H^{2^*} \left( \int_{\mathbb{R}_+} \frac{(K+\rho^2)^{m+n-{2^*}}}{|w_u+\rho^2|^{m+n} |H + \rho^2|^{m+n}} \mathrm{d} \Omega (\rho) \right) \mathrm{d}x \nonumber \\
 \geq& \int_{A_2} H^{2^*} \left( \int_{\mathbb{R}_+} \frac{(\frac{H}{2}+\rho^2)^{m+n-2^*}}{|w_u+\rho^2|^{m+n} |H + \rho^2|^{m+n}} \mathrm{d} \Omega (\rho) \right) \mathrm{d}x \,, \text{ letting $\rho = H^{1/2} \theta$} \nonumber \\
 =& \int_{A_2} H^{-\frac{m+2n}{2}} \left( \int_{\mathbb{R}_+} \frac{(\frac{1}{2}+\theta^2)^{m+n-2^*}}{|\frac{w_u}{H}+\theta^2|^{m+n} |1 + \theta^2|^{m+n}} \mathrm{d} \Omega (\theta) \right) \mathrm{d}x \nonumber \\
 \geq& \int_{A_2} H^{-\frac{m+2n}{2}} \left( \int_0^1 \frac{(\frac{1}{2})^{m+n-{2^*}}}{|\frac{w_u}{H}+\theta^2|^{m+n} 2^{m+n}} \mathrm{d} \Omega (\theta) \right) \mathrm{d}x \,, \text{ as $1 + \theta^2 \leq 2$ for $\theta \in [0,1]$} \nonumber \\
 & \text{letting $\tilde{\theta} = \left( w_u / H \right)^{1/2} \theta$ and using the fact that $\left( H / w_u \right)^{1/2} \geq \sqrt{2}$} \nonumber \\
 \geq& 2^{-2(m+n) + 2^*} \int_{A_2} w_u^{-\frac{m+2n}{2}} \left( \int_0^{\sqrt{2}} (1+\tilde{\theta^2})^{-(m+n)} \mathrm{d} \Omega (\tilde{\theta}) \right) \mathrm{d}x \nonumber \\
 \geq& 2^{-2(m+n) + 2^*} \int_0^{\sqrt{2}} (1+\tilde{\theta^2})^{-(m+n)} \mathrm{d} \Omega (\tilde{\theta}) \int_{A_2} |u^{2t} - v^{2t}| \mathrm{d}x \nonumber. \tag{\ref{Est17}}
\end{align}

\end{document}